\DeclareMathAlphabet{\mathpzc}{OT1}{pzc}{m}{it}
\newcommand{\R}{\mathbb{R}}
\newcommand{\calP}{\mathcal{P}}
\newcommand{\bu}{\mathbf{u}}
\newcommand{\bv}{\mathbf{v}}
\newcommand{\GRAD}{\nabla}
\DeclareMathOperator{\DIV}{div}
\newcommand{\ie}{i.e.,\@\xspace}
\newcommand{\calG}{{\mathcal{G}}}
\newcommand{\bF}{{\mathbf{F}}}
\newcommand{\bL}{{\mathbf{L}}}
\newcommand{\bW}{{\mathbf{W}}}
\newcommand{\bC}{{\mathbf{C}}}
\newcommand{\bG}{{\mathbf{G}}}
\newcommand{\bef}{{\mathbf{f}}}
\newcommand{\bn}{{\mathbf{n}}}
\newcommand{\bphi}{{\boldsymbol{\varphi}}}
\DeclareMathOperator{\supp}{supp}
\DeclareMathOperator*{\dist}{dist}
\newcommand{\TheTitle}{The Poisson and Stokes problems in nonconvex, Lipschitz polytopes}
\newcommand{\ShortTitle}{The Poisson problem on weighted spaces}
\newcommand{\TheAuthors}{E.~Ot\'arola \and A.J.~Salgado}
\headers{\ShortTitle}{\TheAuthors}
\title{{\TheTitle}\thanks{EO has been supported in part by CONICYT through FONDECYT project 3160201. AJS has been supported in part by NSF grant DMS-1720213.}}
\author{
  Enrique Ot\'arola\thanks{Departamento de Matem\'atica, Universidad T\'ecnica Federico Santa Mar\'ia, Valpara\'iso, Chile.
    (\email{enrique.otarola@usm.cl}, \url{http://eotarola.mat.utfsm.cl/}).}
  \and
  Abner J.~Salgado\thanks{Department of Mathematics, University of Tennessee, Knoxville, TN 37996, USA.
    (\email{asalgad1@utk.edu}, \url{http://www.math.utk.edu/\string~abnersg})}
}
\date{Draft version of \today.}
\begin{document}

\maketitle

\begin{abstract}
We show the well-posedness of the Poisson and Stokes problems in weighted spaces over nonconvex, Lipschitz polytopes. For a particular range of $p$, we consider those weights in the Muckenhoupt class $A_p$ that have no singularities in a neighborhood of the boundary of the domain.
\end{abstract}

\begin{keywords}
Lipschitz domains, Muckenhoupt weights, weighted a priori estimates, elliptic equations, Stokes equations.
\end{keywords}

\begin{AMS}
35D30,          
35B45,          
35J25.          
\end{AMS}

\section{Introduction}
\label{sec:intro}

Let $d \in \{2,3\}$ and $\Omega$ be bounded polytope of $\R^d$ with Lipschitz boundary. Notice that we do not assume that $\Omega$ is convex. The purpose of this work is to study the well-posedness of the Dirichlet problem for the Poisson equation
\begin{equation}
\label{eq:PoissonStrong}
  -\Delta u = F \ \text{in } \Omega, \qquad u = 0 \ \text{on } \partial\Omega,
\end{equation}
and the Stokes problem
\begin{equation}
\label{eq:StokesStrong}
  -\Delta \bu + \GRAD \pi = - \DIV \bF, \ \DIV \bu = g, \ \text{in } \Omega, \qquad \bu = 0 \ \text{on } \partial\Omega,
\end{equation}
where we allow the data: $F$ and $(\bF,g)$, respectively to be singular.

The main technical tool that will allow us to assert certain degree of either regularity or integrability on the singular data and solutions, is the theory of weighted spaces \cite{MR0293384,MR1800316}. This has been carried out with a large degree of success for smooth domains. On the other hand, to the best of our knowledge, in the case of, possibly convex, polytopes very little has been done in this direction. For instance, \cite{MR2888310} proves a weighted Helmholtz decomposition on convex polytopes that is equivalent to the well-posedness of \eqref{eq:PoissonStrong}. However, as described in \cite{DO:17}, the argument presented there has a flaw. This was corrected in \cite{DO:17} for convex polytopes, and it is our intention here to, at least partially, remove the convexity assumption and study also the Stokes problem \eqref{eq:StokesStrong}. We will obtain well-posedness on weighted spaces, for a class of weights that do not have singularities or degeneracies near the boundary.

Our presentation will be organized as follows. Some preliminaries will be discussed in Section~\ref{sec:prelim}; where we will introduce the class of weights we shall operate with. The Poisson problem \eqref{eq:PoissonStrong} will be studied in Section~\ref{sec:main} along with some immediate applications of its well-posedness. Finally, the Stokes problem \eqref{eq:StokesStrong} will be analyzed in Section~\ref{sec:Stokes}. 

\section{Preliminaries}
\label{sec:prelim}

We will make repeated use of weighted Lebesgue and Sobolev spaces when the weight belongs to a Muckenhoupt class $A_p$. We refer the reader to \cite{MR1774162,NOS3,MR1800316,MR2491902} for the basic facts about Muckenhoupt classes and the ensuing weighted spaces. Here we only mention that a standard example of a Muckenhoupt weight is the distance to a lower dimensional object, see \cite{MR3215609}. In particular, if $z \in \Omega$ and we define the weight
\begin{equation}
\label{eq:defofourweight}
  \omega_z(x) = |x - z|^\alpha,
\end{equation}
then $\omega_z \in A_p$ provided that $\alpha \in (-d,d(p-1))$.

It is important to notice that in the example above, since $z \in \Omega$, there is a neighborhood of $\partial\Omega$ where the weight $\omega_z$ has no degeneracies or singularities. In fact, it is continuous and strictly positive. This observation allows us to define a restricted class of Muckenhoupt weights for which our results will hold. The following definition is motivated by \cite[Definition 2.5]{MR1601373}.

\begin{definition}[class $A_p(\Omega)$]
\label{def:ApOmega}
Let $\Omega \subset \R^d$ be a Lipschitz domain. For $p \in (1, \infty)$ we say that $\omega \in A_p$ belongs to $A_p(\Omega)$ if there is an open set $\calG \subset \Omega$, and positive constants $\varepsilon>0$ and $\omega_l>0$ such that:
\begin{enumerate}[1.]
  \item $\{ x \in \Omega: \dist(x,\partial\Omega)< \varepsilon\} \subset \calG$,
  
  \item $\omega \in C(\bar\calG)$, and
  
  \item $\omega_l \leq \omega(x)$ for all $x \in \bar\calG$. 
\end{enumerate}
\end{definition}

We shall also make use of the fact that if $p \in (1,\infty)$, $p'=p/(p-1)$ is its conjugate exponent, and $\omega \in A_p$, then $\omega':=\omega^{-p'/p} \in A_{p'}$ with $[\omega']_{A_{p'}} = [\omega]_{A_p}$, where we set 
\[
  [\omega]_{A_p} = \sup_B \left(\fint_B \omega\right)\left(\fint_B \omega'\right)^{p/p'}
\]
and the supremum is taken over all balls $B$.

The ideas we will use to prove our well-posedness results will, mainly, follow those used to prove \cite[Theorem 5.2]{MR1601373}. Essentially, owing to the fact that $\varpi$ is a regular function on a layer near the boundary of $\Omega$, we will use well-posedness on weighted spaces for smooth domains in the interior and an unweighted result near the boundary and then patch these together. To be able to separate these two pieces we define cutoff functions $\psi_i,\psi_\partial \in C_0^\infty(\R^d)$, $\psi_i + \psi_\partial \equiv 1$ on $\bar\Omega$ with the following properties:
\begin{enumerate}[$\bullet$]
  \item $\psi_i \equiv 1$ in a neighborhood of $\Omega \setminus \calG$,
  
  \item $\psi_i \equiv 0$ in a neighborhood of $\partial\Omega$, and
  
  \item setting $\Omega_i$ to be the interior of $\supp \psi_i$, then $\partial\Omega_i \in C^{1,1}$.
\end{enumerate}
Note that, without loss of generality, we can assume that $\partial\calG$ is Lipschitz. Observe also that $\supp \GRAD \psi_i \cup \supp \GRAD \psi_\partial \subset \bar \calG$.

For future use we define
\begin{equation}
\label{eq:defofp1}
  p_1 > \begin{dcases}
          3 & d = 3, \\
          4 & d = 2.
        \end{dcases}
  \qquad
  p_0 = p_1'.
\end{equation}

Finally, the relation $A \lesssim B$ will mean that $A \leq c B$ for a nonessential constant $c$ that might change at each occurrence.

\section{The Poisson problem}
\label{sec:main}

Let us now study problem \eqref{eq:PoissonStrong}. We begin by stating our definition of weak solution. Namely, for $p \in (1,\infty)$ and $\varpi \in A_p$, given $F \in W^{-1,p}(\varpi,\Omega)$ we seek for $u \in W^{1,p}_0(\varpi,\Omega)$ such that
\begin{equation}
\label{eq:Poisson}
  \int_\Omega \GRAD u \GRAD \varphi = \langle F, \varphi \rangle, \quad \forall \varphi \in C_0^\infty(\Omega).
\end{equation}
Where by $\langle \cdot, \cdot \rangle$ we denoted the duality pairing between $W^{-1,p}(\varpi,\Omega)$ and $W^{1,p'}(\varpi',\Omega)$.

We will need two existence and uniqueness results for problem \eqref{eq:Poisson}. The first one deals with the well posedness of \eqref{eq:Poisson} on weighted spaces and $C^1$ domains. For a proof we refer the reader to \cite[Theorem 2.5]{MR3531368}.

\begin{theorem}[well posedness for $C^1$ domains]
\label{thm:C1domain}
Let $\Omega$ be a bounded $C^1$ domain, $p \in (1,\infty)$ and $\omega \in A_p$. Then, for every $F \in W^{-1,p}(\omega,\Omega)$ there is a unique $u \in W^{1,p}_0(\omega,\Omega)$ that is a weak solution to \eqref{eq:Poisson} and, moreover, it satisfies
\[
  \| \GRAD u \|_{\bL^p(\omega,\Omega)} \lesssim \| F \|_{W^{-1,p}(\omega,\Omega)},
\]
where the hidden constant depends on $\Omega$, $[\omega]_{A_p}$, and $p$, but it is independent of $F$.
\end{theorem}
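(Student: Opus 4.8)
This statement is a weighted Calder\'on--Zygmund estimate for the Dirichlet Laplacian, and I would prove it along those lines. \emph{First}, I would reduce everything to an a priori estimate. By definition $W^{-1,p}(\omega,\Omega)$ consists of distributions of the form $\DIV\bef$ with $\bef\in\bL^p(\omega,\Omega)$ and $\|\bef\|_{\bL^p(\omega,\Omega)}\lesssim\|F\|_{W^{-1,p}(\omega,\Omega)}$, so \eqref{eq:Poisson} becomes: given $\bef$, find $u$ with $-\Delta u=\DIV\bef$ in $\Omega$, $u=0$ on $\partial\Omega$, and the claim is equivalent to $\|\GRAD u\|_{\bL^p(\omega,\Omega)}\lesssim\|\bef\|_{\bL^p(\omega,\Omega)}$. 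Uniqueness is not an issue: since $\omega\in A_p$ obeys a reverse H\"older inequality, $\omega^{-1/(p-1)}\in L^{1+\delta}_{\mathrm{loc}}$ for some $\delta>0$, which by H\"older's inequality yields a continuous embedding $W^{1,p}_0(\omega,\Omega)\hookrightarrow W^{1,s}_0(\Omega)$ for some $s>1$; classical unweighted uniqueness on the bounded domain $\Omega$ then applies. I would handle existence at the very end.

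\emph{Second}, and this is the heart of the matter, I would show that the solution operator $\mathcal{T}\colon\bef\mapsto\GRAD u$ is a Calder\'on--Zygmund operator. Boundedness on $\bL^2(\Omega)$ is immediate from Lax--Milgram (with norm $\le 1$). For the kernel, $\GRAD u(x)=\int_\Omega \GRAD_x\GRAD_y G(x,y)\,\bef(y)\diff y$, where $G$ is the Green's function of the Dirichlet Laplacian on $\Omega$, and one must establish the size bound $|\GRAD_x\GRAD_y G(x,y)|\lesssim|x-y|^{-d}$ together with the H\"ormander smoothness estimates $|K(x,y)-K(x',y)|\lesssim|x-x'|^\gamma|x-y|^{-d-\gamma}$ (and symmetrically in $y$) for $|x-x'|\le\tfrac12|x-y|$, \emph{uniformly up to $\partial\Omega$}. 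Away from the boundary this is interior elliptic regularity; near and across $\partial\Omega$, where the domain is only $C^1$, this is exactly where the difficulty lies, and one must invoke the $C^1$ theory of layer potentials --- in particular the Fabes--Jodeit--Rivi\`ere fact that on a $C^1$ surface the double-layer potential is a compact perturbation of $\pm\tfrac12 I$, which gives invertibility of the boundary integral operators on $L^q(\partial\Omega)$ for the full range $1<q<\infty$ and the requisite pointwise bounds on $G$ and its derivatives. I expect this to be the main obstacle (and presumably the reason the authors cite \cite[Theorem 2.5]{MR3531368} rather than reprove it).

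\emph{Third}, once $\mathcal{T}$ is identified as an $\bL^2$-bounded Calder\'on--Zygmund operator, the weighted bound follows from the general weighted theory: by the Coifman--Fefferman inequality (equivalently, Rubio de Francia extrapolation), such an operator extends boundedly to $\bL^p(\omega,\Omega)$ for every $p\in(1,\infty)$ and every $\omega\in A_p$, with norm depending only on $p$, $[\omega]_{A_p}$, and the Calder\'on--Zygmund constants of $\mathcal{T}$, hence only on $\Omega$, $p$, and $[\omega]_{A_p}$. Running this first for smooth $\bef$ (where the $\bL^2$-solution, the extended operator, and the sought weak solution coincide) and then passing to the limit by density gives $\|\GRAD u\|_{\bL^p(\omega,\Omega)}\lesssim\|\bef\|_{\bL^p(\omega,\Omega)}\lesssim\|F\|_{W^{-1,p}(\omega,\Omega)}$, which is the asserted estimate. \emph{Finally}, for existence I would apply the same estimate to the conjugate data, noting $\omega'=\omega^{-p'/p}\in A_{p'}$ with $[\omega']_{A_{p'}}=[\omega]_{A_p}$: the map $u\mapsto-\Delta u$ from $W^{1,p}_0(\omega,\Omega)$ to $W^{-1,p}(\omega,\Omega)$ then has closed range (from the $p$-estimate) and dense range (from injectivity of its adjoint, which is the $p'$-estimate), hence is an isomorphism; this produces the unique weak solution and concludes the proof.
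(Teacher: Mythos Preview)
The paper does not prove this theorem at all; it simply cites \cite[Theorem 2.5]{MR3531368}. Your outline is therefore not to be compared against an argument in the paper but against the literature, and in broad strokes it is the right strategy: reduce to $-\Delta u=\DIV\bef$, identify $\bef\mapsto\GRAD u$ as a singular integral operator associated to the Dirichlet Green function, and invoke weighted Calder\'on--Zygmund theory (Coifman--Fefferman, then extrapolation) to pass from the unweighted $L^2$ bound to $\bL^p(\omega,\Omega)$ for all $\omega\in A_p$. Your treatment of uniqueness via the reverse H\"older embedding $W^{1,p}_0(\omega,\Omega)\hookrightarrow W^{1,s}_0(\Omega)$ and of existence by duality is correct.

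One technical point deserves care. You assert the pointwise H\"ormander regularity $|K(x,y)-K(x',y)|\lesssim |x-x'|^\gamma|x-y|^{-d-\gamma}$ for some $\gamma>0$, uniformly up to $\partial\Omega$. On a merely $C^1$ domain there is no H\"older modulus for the boundary charts, so a uniform $\gamma>0$ of this kind is not generally available; this is exactly what distinguishes $C^1$ from $C^{1,\alpha}$. The Fabes--Jodeit--Rivi\`ere theory you cite gives compactness of the boundary layer potentials and hence $L^q(\partial\Omega)$-solvability for all $q$, but it does not by itself yield the classical H\"ormander kernel condition on $\GRAD_x\GRAD_y G$. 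The usual routes around this are either (i) a Dini-type or $L^r$-H\"ormander condition, which suffices for Coifman--Fefferman, or (ii) the real-variable approach of Caffarelli--Peral/Shen, which replaces kernel regularity by a local weak reverse H\"older (or $L^p\to L^\infty$) improvement for harmonic functions near a flat boundary and then transfers to $C^1$ domains by a small-oscillation perturbation; weighted bounds then follow from a good-$\lambda$/sharp maximal function argument. Either fix closes the gap, and the rest of your plan goes through; the cited reference proceeds along the latter lines rather than via pointwise kernel bounds.
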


The second result deals with the well-posedness of \eqref{eq:Poisson} on Lipschitz domains. This result can be found in \cite[Theorem 2]{MR1030819} and \cite[Theorem 0.5]{MR1331981}.

\begin{theorem}[well posedness for Lipschitz domains]
\label{thm:Lipdomains}
Let $\Omega$ be a bounded Lipschitz domain. If $p \in (p_0,p_1)$ where $p_0$ and $p_1$ are defined in \eqref{eq:defofp1}, then for every $F \in W^{-1,p}(\Omega)$ there is a unique $u \in W^{1,p}_0(\Omega)$ that is a weak solution to \eqref{eq:Poisson} and, moreover, it satisfies
\[
  \| \GRAD u \|_{\bL^p(\Omega)} \lesssim \| F \|_{W^{-1,p}(\Omega)},
\]
where the hidden constant depends on $\Omega$, and $p$, but it is independent of $F$.
\end{theorem}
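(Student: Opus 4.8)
Since Theorem~\ref{thm:Lipdomains} is quoted from the literature, I will only outline the structure of a proof, deferring the genuinely hard harmonic-analytic input to \cite{MR1030819,MR1331981}. The plan is to reduce the full range $p\in(p_0,p_1)$ to the Hilbert case $p=2$ by duality and interpolation, so that only one endpoint estimate near $p_1$ remains to be quoted.

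For $p=2$ --- which lies in $(p_0,p_1)$ for both $d\in\{2,3\}$ --- the bilinear form $(v,\varphi)\mapsto\int_\Omega\GRAD v\cdot\GRAD\varphi$ is bounded and, by the Poincar\'e inequality, coercive on $W^{1,2}_0(\Omega)$, so the Lax--Milgram lemma produces a unique weak solution $u\in W^{1,2}_0(\Omega)$ with $\|\GRAD u\|_{\bL^2(\Omega)}\lesssim\|F\|_{W^{-1,2}(\Omega)}$. Next, write $\mathcal{T}_p=-\Delta:W^{1,p}_0(\Omega)\to W^{-1,p}(\Omega)$ for the solution operator, which is bounded and linear for every $p\in(1,\infty)$. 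Using $W^{-1,p}(\Omega)=(W^{1,p'}_0(\Omega))^*$, the reflexivity of these spaces, and the symmetry of the Dirichlet integral, one checks that the Banach-space adjoint of $\mathcal{T}_p$ is $\mathcal{T}_{p'}$; since a bounded operator is an isomorphism exactly when its adjoint is, well-posedness holds at $p$ if and only if it holds at $p'$, and it therefore suffices to treat $p\ge 2$.

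Suppose, in addition, that $\mathcal{T}_q$ is an isomorphism for some $q>2$. The inverses $\mathcal{T}_2^{-1}:W^{-1,2}(\Omega)\to W^{1,2}_0(\Omega)$ and $\mathcal{T}_q^{-1}:W^{-1,q}(\Omega)\to W^{1,q}_0(\Omega)$ are consistent (solutions are unique), so complex interpolation of the scales $W^{-1,p}(\Omega)$ and $W^{1,p}_0(\Omega)$ shows that $\mathcal{T}_p^{-1}$ is bounded --- hence $\mathcal{T}_p$ is an isomorphism --- for every $p\in[2,q]$. Combined with the duality step, this gives well-posedness for all $p\in(q',q)$; letting $q$ approach $p_1$ from below then covers the whole interval $(p_0,p_1)$.

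It remains to produce, for each $q<p_1$ (which may be chosen larger than $3$ when $d=3$ and larger than $4$ when $d=2$), the estimate $\|\GRAD u\|_{\bL^q(\Omega)}\lesssim\|F\|_{W^{-1,q}(\Omega)}$; this is the only step where the Lipschitz geometry enters essentially, and it is the main obstacle. The standard route is to extend $F$ to $\widetilde F\in W^{-1,q}(\R^d)$, subtract the Newtonian potential $\mathcal{N}\widetilde F\in W^{1,q}(\R^d)$, and reduce to the homogeneous Dirichlet problem for $w=u-\mathcal{N}\widetilde F$, harmonic in $\Omega$ with boundary datum $-\operatorname{tr}(\mathcal{N}\widetilde F)$ in the $W^{1,q}$-trace space $B^{1-1/q}_{q,q}(\partial\Omega)$; the bound $\|\GRAD w\|_{\bL^q(\Omega)}\lesssim\|\operatorname{tr}(\mathcal{N}\widetilde F)\|_{B^{1-1/q}_{q,q}(\partial\Omega)}$ then follows from the invertibility of the single- and double-layer potential operators on $L^q(\partial\Omega)$ and $W^{1,q}(\partial\Omega)$ proved by Dahlberg, Verchota and Jerison--Kenig, which holds precisely in the asserted range. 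The thresholds $3$ and $4$ are sharp, being dictated by the conical (respectively edge) singularities a Lipschitz boundary may exhibit, whereas the exclusion of cusps and slits is exactly what permits the strict inequalities in \eqref{eq:defofp1}. For the present purposes we do not reproduce this argument, and refer to \cite{MR1030819,MR1331981}.
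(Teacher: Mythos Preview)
The paper does not prove this theorem at all: it merely cites \cite[Theorem~2]{MR1030819} and \cite[Theorem~0.5]{MR1331981} and moves on. Your proposal therefore goes further than the paper does, supplying a structural outline (Lax--Milgram at $p=2$, duality to reduce to $p\ge 2$, complex interpolation between $p=2$ and an endpoint $q<p_1$, and the layer-potential machinery for the endpoint) that is standard and essentially correct; the hard analytic content is, as you say, still deferred to the same references, so at the level of what is actually proved the two treatments coincide. One small caveat: the interpolation step silently uses that the scales $\{W^{1,p}_0(\Omega)\}_p$ and $\{W^{-1,p}(\Omega)\}_p$ form complex interpolation families on a Lipschitz domain, which is true but itself requires justification (e.g., via a Stein extension or the Jerison--Kenig theory); if you want the sketch to be self-contained you should at least flag this.
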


We are now in position to state the well posedness of \eqref{eq:Poisson}.

\begin{theorem}[well posedness on weighted spaces for Lipschitz domains]
\label{thm:main}
Let $\Omega$ be a bounded Lipschitz domain, $p \in (p_0,p_1)$, where $p_0$ and $p_1$ are defined in \eqref{eq:defofp1}, and $\varpi \in A_p(\Omega)$. Then, for every $F \in W^{-1,p}(\varpi,\Omega)$ there is a unique $u \in W^{1,p}_0(\varpi,\Omega)$ that is a weak solution to \eqref{eq:Poisson} and, moreover, it satisfies
\begin{equation}
\label{eq:aprioriest}
  \| \GRAD u \|_{\bL^p(\varpi,\Omega)} \lesssim \| F \|_{W^{-1,p}(\varpi,\Omega)},
\end{equation}
where the hidden constant depends on $\Omega$, $[\varpi]_{A_p}$, and $p$, but it is independent of $F$.
\end{theorem}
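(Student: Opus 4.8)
The plan is to localize with the cutoffs $\psi_i,\psi_\partial$ and patch a \emph{weighted} estimate on the $C^{1,1}$ domain $\Omega_i$ (via Theorem~\ref{thm:C1domain}) to an \emph{unweighted} one on the Lipschitz domain $\Omega$ (via Theorem~\ref{thm:Lipdomains}). The structural point is that, by Definition~\ref{def:ApOmega}, $\varpi$ is continuous and $\geq\omega_l>0$ on the compact set $\bar\calG$, hence also bounded above there, so both $\varpi$ and $\varpi'=\varpi^{-p'/p}$ are comparable to $1$ on $\bar\calG$; since $\supp\GRAD\psi_i\cup\supp\GRAD\psi_\partial\subset\bar\calG$ and $\partial\Omega_i\subset\calG$, near those sets the weighted and unweighted $W^{1,p}$ (and $W^{-1,p}$) scales coincide up to constants depending only on $\omega_l$, $\|\varpi\|_{L^\infty(\bar\calG)}$ and $[\varpi]_{A_p}$. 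Write a candidate weak solution as $u=\psi_i u+\psi_\partial u$ and use $-\Delta(\psi u)=\psi F-2\DIV(u\GRAD\psi)+u\Delta\psi$: then $\psi_\partial u\in W^{1,p}_0(\Omega)$ solves \eqref{eq:Poisson} on $\Omega$ with data $\psi_\partial F-2\DIV(u\GRAD\psi_\partial)+u\Delta\psi_\partial$, and $\psi_i u\in W^{1,p}_0(\varpi,\Omega_i)$ solves \eqref{eq:Poisson} on $\Omega_i$ with the analogous data (both memberships use that the supports lie where $\varpi$ is harmless and that $u$ has vanishing trace on $\partial\Omega$, resp.\ on $\partial\Omega_i$). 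Applying Theorems~\ref{thm:Lipdomains} and \ref{thm:C1domain}, bounding $\psi_\partial F$, $\psi_i F$ by $\|F\|_{W^{-1,p}(\varpi,\Omega)}$ (testing against $\varpi'$-weighted norms on $\bar\calG$) and the commutator terms — supported in $K:=(\supp\GRAD\psi_i\cup\supp\GRAD\psi_\partial)\cap\bar\Omega$, a compact subset of $\Omega$, and involving $u$ but not $\GRAD u$ — by $\|u\|_{L^p(K)}$, and adding, I obtain for every weak solution $u\in W^{1,p}_0(\varpi,\Omega)$ the preliminary estimate
\[
  \|\GRAD u\|_{\bL^p(\varpi,\Omega)} \lesssim \|F\|_{W^{-1,p}(\varpi,\Omega)}+\|u\|_{L^p(K)}.
\]

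Next I would establish uniqueness. If $u\in W^{1,p}_0(\varpi,\Omega)$ is a weak solution with $F=0$, then $u$ is harmonic, hence smooth in $\Omega$ and in particular $u,\GRAD u$ are bounded on the compact set $\{x\in\bar\Omega:\dist(x,\partial\Omega)\geq\varepsilon\}\supset\overline{\Omega\setminus\calG}$; together with $\GRAD u\in\bL^p(\varpi,\calG)=\bL^p(\calG)$ this yields $u\in W^{1,p}(\Omega)$, and since the weighted and unweighted traces on $\partial\Omega$ agree (the approximating sequence converges in $W^{1,p}(\calG)$) we get $u\in W^{1,p}_0(\Omega)$; as $p\in(p_0,p_1)$, Theorem~\ref{thm:Lipdomains} forces $u=0$. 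With uniqueness available, the term $\|u\|_{L^p(K)}$ is removed by the standard compactness argument: $\varpi\simeq1$ on the compact set $K\subset\Omega$, so $W^{1,p}_0(\varpi,\Omega)\hookrightarrow L^p(K)$ is compact; if \eqref{eq:aprioriest} failed one would produce weak solutions $u_n$ with $\|\GRAD u_n\|_{\bL^p(\varpi,\Omega)}=1$, $\|F_n\|_{W^{-1,p}(\varpi,\Omega)}\to0$, bounded in the reflexive space $W^{1,p}_0(\varpi,\Omega)$ by the preliminary estimate and the weighted Poincar\'e inequality, with $\|u_n\|_{L^p(K)}$ bounded below; a subsequence converging weakly in $W^{1,p}_0(\varpi,\Omega)$ and strongly in $L^p(K)$ would give a nonzero weak solution with vanishing data, a contradiction.

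For existence, $C_0^\infty(\Omega)$ is dense in $W^{-1,p}(\varpi,\Omega)$, so it suffices to solve \eqref{eq:Poisson} for $F\in C_0^\infty(\Omega)$ and pass to the limit using \eqref{eq:aprioriest}. For such $F$, Theorem~\ref{thm:Lipdomains} (with exponent $p\in(p_0,p_1)$) gives $u\in W^{1,p}_0(\Omega)$; interior regularity makes $u$ smooth in $\Omega$, so $\GRAD u$ is bounded on $\overline{\Omega\setminus\calG}$, and since $\varpi\simeq1$ on $\bar\calG$ and $\varpi\in L^1(\overline{\Omega\setminus\calG})$ we conclude $u\in W^{1,p}_0(\varpi,\Omega)$ — the desired weak solution. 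Applying \eqref{eq:aprioriest} to differences shows the approximate solutions form a Cauchy sequence in $W^{1,p}_0(\varpi,\Omega)$ whose limit solves \eqref{eq:Poisson} for the given $F$ and inherits \eqref{eq:aprioriest}; uniqueness was shown above.

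I expect the main difficulty to be not any single deep estimate but the bookkeeping in the localization step: verifying that $\psi_i u$ and $\psi_\partial u$ indeed belong to $W^{1,p}_0(\varpi,\Omega_i)$ and $W^{1,p}_0(\Omega)$ respectively, and that the commutator data have the claimed $W^{-1,p}$-norms. This rests entirely on the two inclusions $\supp\GRAD\psi_i\cup\supp\GRAD\psi_\partial\subset\bar\calG$ and $\partial\Omega_i\subset\calG$, together with the continuity and strict positivity of $\varpi$ on $\bar\calG$; once these are in place the compactness and density arguments are routine.
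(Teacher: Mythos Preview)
Your approach is essentially the paper's: localize with $\psi_i,\psi_\partial$, combine the weighted $C^1$ estimate (Theorem~\ref{thm:C1domain}) with the unweighted Lipschitz estimate (Theorem~\ref{thm:Lipdomains}) to get a G\aa rding-type inequality, then run the standard uniqueness--compactness--approximation machine. Two differences are worth noting, and both are simplifications on your side. First, your uniqueness argument works uniformly for all $p\in(p_0,p_1)$: you use harmonicity to get interior smoothness, combine with $\varpi\simeq1$ on $\bar\calG$ to land in $W^{1,p}_0(\Omega)$, and invoke Theorem~\ref{thm:Lipdomains} directly. The paper instead restricts to $p\in[2,p_1)$, argues $u\in W^{1,2}_0(\Omega)$ so it can test against itself, and only afterward recovers $p\in(p_0,2)$ by duality. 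Second, in the existence step you use that $\varpi\in A_p\subset L^1_{\mathrm{loc}}$ and that $\GRAD u$ is bounded on the compact set $\overline{\Omega\setminus\calG}$ to conclude $u\in W^{1,p}(\varpi,\Omega)$; the paper invokes the reverse H\"older inequality and higher Sobolev regularity, which is heavier than necessary. One small remark: the paper applies Theorem~\ref{thm:Lipdomains} to $u_\partial$ on the Lipschitz domain $\calG$ rather than on $\Omega$; either choice works since $\psi_\partial u$ is supported in $\bar\calG$ and vanishes on both pieces of $\partial\calG$.
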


Before proving this result, we first establish a preliminary a priori estimate.

\begin{lemma}[G\aa rding-like inequality]
\label{lem:Garding}
Let $\Omega$ be a bounded Lipschitz domain, $p \in (p_0,p_1)$, where $p_0$ and $p_1$ are defined in \eqref{eq:defofp1}, $\varpi \in A_p(\Omega)$, and $F \in W^{-1,p}(\varpi,\Omega)$. If $u \in W^{1,p}_0(\varpi,\Omega)$ is a weak solution of \eqref{eq:Poisson}, then it satisfies
\[
  \| \GRAD u \|_{\bL^p(\varpi,\Omega)} \lesssim \| F \|_{W^{-1,p}(\varpi,\Omega)} + \| u \|_{L^p(\calG)},
\]
where the hidden constant depends on $\calG$, $p$ and $[\varpi]_{A_p}$, but it is independent of $F$.
\end{lemma}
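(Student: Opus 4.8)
The plan is to use the cutoff functions $\psi_i$ and $\psi_\partial$ introduced in Section~\ref{sec:prelim} to split $u = \psi_i u + \psi_\partial u$ and estimate each piece separately, on the $C^{1,1}$ domain $\Omega_i$ and the Lipschitz domain $\Omega$ respectively, patching the results together. Note first that $\psi_\partial u$ is supported in $\bar\calG$ (away from $\Omega\setminus\calG$ where $\psi_i\equiv1$), and on $\bar\calG$ the weight $\varpi$ is continuous and bounded below by $\omega_l>0$; this is precisely what will let us pass from weighted to unweighted norms on that piece. More precisely, on $\calG$ we have $\|\GRAD(\psi_\partial u)\|_{\bL^p(\varpi,\Omega)} \lesssim \|\GRAD(\psi_\partial u)\|_{\bL^p(\Omega)}$ and conversely $\|\GRAD(\psi_\partial u)\|_{\bL^p(\Omega)} \lesssim \|\GRAD(\psi_\partial u)\|_{\bL^p(\varpi,\Omega)}$ with constants depending on $\omega_l$, $\calG$ and $\|\varpi\|_{C(\bar\calG)}$ — which themselves are controlled in terms of $[\varpi]_{A_p}$ via the structure of $A_p(\Omega)$.

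First I would handle the interior piece $u_i := \psi_i u$. A direct computation shows $-\Delta u_i = \psi_i F - 2\GRAD\psi_i\cdot\GRAD u - u\Delta\psi_i =: F_i$ in $\Omega_i$, with $u_i \in W^{1,p}_0(\varpi,\Omega_i)$ (the trace vanishes since $\psi_i\equiv0$ near $\partial\Omega$). Since $\partial\Omega_i\in C^{1,1}\subset C^1$, Theorem~\ref{thm:C1domain} applies and yields $\|\GRAD u_i\|_{\bL^p(\varpi,\Omega_i)} \lesssim \|F_i\|_{W^{-1,p}(\varpi,\Omega_i)}$. Now I estimate the right-hand side: $\|\psi_i F\|_{W^{-1,p}(\varpi,\Omega_i)} \lesssim \|F\|_{W^{-1,p}(\varpi,\Omega)}$ since multiplication by the fixed smooth $\psi_i$ is bounded on the weighted space (this uses that $C^\infty_0$ multipliers act boundedly on $W^{1,p'}(\varpi',\Omega)$, hence by duality on $W^{-1,p}(\varpi,\Omega)$); the term $u\Delta\psi_i$ is supported in $\bar\calG$ and contributes $\lesssim \|u\|_{L^p(\calG)}$ (again using $\varpi\le\|\varpi\|_{C(\bar\calG)}$ there, and $W^{-1,p}\hookleftarrow L^p$); and the term $2\GRAD\psi_i\cdot\GRAD u$ is supported in $\supp\GRAD\psi_i\subset\bar\calG$, so it can be estimated in $L^p(\calG)$ (in fact in the unweighted $W^{-1,p}$, trading down one derivative) as $\lesssim \|\GRAD u\|_{\bL^p(\calG)}$, and since $\varpi$ is bounded above and below on $\bar\calG$ this is comparable to a weighted norm over $\calG$. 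Here one must be slightly careful: we want to avoid the appearance of $\|\GRAD u\|_{\bL^p(\calG)}$ with a constant that could absorb the whole left-hand side. The remedy is to put $2\GRAD\psi_i\cdot\GRAD u$ into $W^{-1,p}$ rather than $L^p$, so that it is dominated by $\|\GRAD(\chi u)\|$ for a cutoff $\chi$ equal to $1$ on $\supp\GRAD\psi_i$ and supported in $\calG$, and then handle that lower-order-in-structure term via the Lipschitz estimate below.

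Next I would handle the boundary piece $u_\partial := \psi_\partial u$, which solves $-\Delta u_\partial = \psi_\partial F - 2\GRAD\psi_\partial\cdot\GRAD u - u\Delta\psi_\partial$ in $\Omega$ with $u_\partial\in W^{1,p}_0(\Omega)$ (the unweighted space: since $u_\partial$ is supported in $\bar\calG$ where $\varpi$ is bounded below and above, $W^{1,p}_0(\varpi,\Omega)$ and $W^{1,p}_0(\Omega)$ restricted to functions supported in $\calG$ coincide with equivalent norms). Since $p\in(p_0,p_1)$, Theorem~\ref{thm:Lipdomains} gives $\|\GRAD u_\partial\|_{\bL^p(\Omega)} \lesssim \|\psi_\partial F\|_{W^{-1,p}(\Omega)} + \|\GRAD\psi_\partial\cdot\GRAD u\|_{W^{-1,p}(\Omega)} + \|u\Delta\psi_\partial\|_{W^{-1,p}(\Omega)}$. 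The first term is $\lesssim\|\psi_\partial F\|_{W^{-1,p}(\varpi,\Omega)}\lesssim\|F\|_{W^{-1,p}(\varpi,\Omega)}$ because $\psi_\partial F$ is supported in $\bar\calG$ and there, duality against $W^{1,p'}(\varpi',\Omega)$-functions costs only a factor involving $\omega_l$ and $\|\varpi\|_{C(\bar\calG)}$; the last term is $\lesssim\|u\|_{L^p(\calG)}$; and the middle term $\GRAD\psi_\partial\cdot\GRAD u$, supported in $\bar\calG$, is $\lesssim\|\GRAD u\|_{\bL^p(\calG)}$ and hence, by the weight equivalence on $\bar\calG$, $\lesssim\|\GRAD u\|_{\bL^p(\varpi,\calG)}$. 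Finally, combining the two estimates and using $\GRAD u = \GRAD u_i + \GRAD u_\partial$ together with the equivalence of weighted and unweighted norms on $\calG$, I would arrive at
\[
  \|\GRAD u\|_{\bL^p(\varpi,\Omega)} \lesssim \|F\|_{W^{-1,p}(\varpi,\Omega)} + \|u\|_{L^p(\calG)} + \|\GRAD u\|_{\bL^p(\varpi,\calG)}.
\]
The main obstacle is the last term: one needs to remove $\|\GRAD u\|_{\bL^p(\varpi,\calG)}$ from the right-hand side. Since $\calG$ is a strict subset of $\Omega$, this term is genuinely lower-order relative to $\|\GRAD u\|_{\bL^p(\varpi,\Omega)}$ only in a compactness sense, not a direct one, so a naive absorption fails. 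The fix is to iterate the localization with a nested family of cutoffs supported in shrinking neighborhoods of $\partial\Omega$, or — cleaner — to use a covering/partition-of-unity refinement of $\calG$ so that on each small patch the gradient term can be traded against the $L^p(\calG)$ term plus the data via a local Poincaré/interpolation inequality (an $\varepsilon$-Young argument: $\|\GRAD u\|_{\bL^p(\varpi,\calG)}\le\varepsilon\|\GRAD u\|_{\bL^p(\varpi,\Omega)} + C_\varepsilon\|u\|_{L^p(\calG)}$, valid because on $\calG$ the weighted and unweighted gradient norms are equivalent and one can use the compact embedding $W^{1,p}(\calG)\hookrightarrow\hookrightarrow L^p(\calG)$ together with the fact that the full gradient norm controls the $W^{1,p}$ norm). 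Absorbing the $\varepsilon$-term into the left-hand side then yields the claimed inequality with the hidden constant depending only on $\calG$, $p$, and $[\varpi]_{A_p}$.
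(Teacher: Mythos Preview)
Your overall strategy---split $u=\psi_i u+\psi_\partial u$, apply Theorem~\ref{thm:C1domain} on $\Omega_i$ and Theorem~\ref{thm:Lipdomains} on the Lipschitz piece---matches the paper's. The gap is at the very end. The absorption inequality you invoke,
\[
  \|\GRAD u\|_{\bL^p(\varpi,\calG)} \le \varepsilon\,\|\GRAD u\|_{\bL^p(\varpi,\Omega)} + C_\varepsilon\,\|u\|_{L^p(\calG)},
\]
is simply false: $\|\GRAD u\|_{\bL^p(\calG)}$ is a top-order quantity, not a compact perturbation of $\|\GRAD u\|_{\bL^p(\Omega)}$. For any $u$ supported in $\calG$ the two gradient norms coincide, so no $\varepsilon$-trade-off is possible; the compact embedding $W^{1,p}(\calG)\hookrightarrow L^p(\calG)$ goes the wrong way for what you need (it lets you control $\|u\|_{L^p}$ by $\varepsilon\|\GRAD u\|_{L^p}$, not conversely). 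The iterated-cutoff alternative you sketch has the same defect.

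The fix---and this is exactly what the paper does---is to never produce a $\GRAD u$ term on the right-hand side in the first place. Work at the level of the weak formulation rather than the strong equation: for $\varphi\in C_0^\infty(\Omega_i)$,
\[
  \int_{\Omega_i}\GRAD u_i\cdot\GRAD\varphi
  = \int_{\Omega_i}\GRAD u\cdot\GRAD(\psi_i\varphi)
    - \int_{\Omega_i}\varphi\,\GRAD u\cdot\GRAD\psi_i
    + \int_{\Omega_i} u\,\GRAD\psi_i\cdot\GRAD\varphi,
\]
and then integrate the middle term by parts once more, using that $\varphi\,\GRAD\psi_i$ is compactly supported:
\[
  \int_{\Omega_i}\varphi\,\GRAD u\cdot\GRAD\psi_i = -\int_{\calG} u\,\DIV(\varphi\,\GRAD\psi_i).
\]
Now every term on the right involves only $u$ (not $\GRAD u$) over $\calG$, so $\|F_i\|_{W^{-1,p}(\varpi,\Omega_i)}\lesssim \|F\|_{W^{-1,p}(\varpi,\Omega)}+\|u\|_{L^p(\calG)}$ directly. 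The same manipulation handles $u_\partial$ on the Lipschitz domain $\calG$, and the estimate closes with no absorption step at all.
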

\begin{proof}
Let $u_i = u \psi_i \in W^{1,p}_0(\varpi,\Omega_i)$ and $\varphi \in C_0^\infty(\Omega_i)$ then
\begin{equation}
\label{eq:keyest}
\begin{aligned}
  \int_{\Omega_i} \GRAD u_i \GRAD \varphi &= \int_{\Omega_i} \GRAD u \GRAD\left(\psi_i  \varphi \right) - \int_{\Omega_i} \varphi \GRAD u \GRAD \psi_i + \int_{\Omega_i} u \GRAD \psi_i \GRAD \varphi \\
  &= \int_{\Omega_i} \GRAD u \GRAD\left(\psi_i  \varphi \right) + \int_\calG u \DIV\left( \varphi \GRAD \psi_i \right) + \int_\calG u \GRAD \psi_i \GRAD \varphi,
\end{aligned}
\end{equation}
where we used that $\supp \GRAD \psi_i \subset \bar\calG$. This identity shows that $u_i$ is a weak solution to \eqref{eq:Poisson} over $\Omega_i \in C^{1,1}$ with right hand side $F_i$ defined by
\[
  \langle F_i, \varphi \rangle := \langle F, \psi_i\varphi \rangle + \int_\calG u \DIV\left( \varphi \GRAD \psi_i \right) + \int_\calG u \GRAD \psi_i \GRAD \varphi.
\]
Consequently, invoking the estimate of Theorem~\ref{thm:C1domain} we can obtain that
\[
  \| \GRAD u_i \|_{\bL^p(\varpi,\Omega_i)} \lesssim \| F_i \|_{W^{-1,p}(\varpi,\Omega_i)}.
\]
Now, using the fact that $\varpi$, when restricted to $\calG$ is uniformly positive and bounded we can estimate
\begin{align*}
  \| F_i \|_{W^{-1,p}(\varpi,\Omega_i)} &\lesssim \| F \|_{W^{-1,p}(\varpi,\Omega)}
    + \sup_{0 \neq \varphi \in W^{1,p'}_0(\varpi',\Omega_i)} \frac{ \int_{\calG} |u| |\GRAD \varphi| }{ \| \GRAD \varphi \|_{\bL^{p'}(\varpi',\Omega_i)}} \\
    &+ \sup_{0 \neq \varphi \in W^{1,p'}_0(\varpi',\Omega_i)} \frac{ \int_{\calG} |u| |\varphi| }{ \| \GRAD \varphi \|_{\bL^{p'}(\varpi',\Omega_i)}} \\
    &\lesssim \| F \|_{W^{-1,p}(\varpi,\Omega)} + \| u \|_{L^p(\calG)}.
\end{align*}
Combining the previous two bounds allows us to conclude
\begin{equation}
\label{eq:interiorestimate}
  \| \GRAD u_i \|_{\bL^p(\varpi,\Omega_i)} \lesssim \| F \|_{W^{-1,p}(\varpi,\Omega)} + \| u \|_{L^p(\calG)}.
\end{equation}

Define now $u_\partial = u \psi_\partial \in W^{1,p}_0(\calG)$. Similar computations, but using now Theorem~\ref{thm:Lipdomains} for the Lipschitz domain $\calG$ allow us to conclude
\[
  \| \GRAD u_\partial \|_{\bL^p(\calG)} \lesssim \| F \|_{W^{-1,p}(\varpi,\Omega)} + \| u \|_{L^p(\calG)}
\]
so that, using the uniform boundedness and positivity of $\varpi$ over $\calG$ we conclude
\begin{equation}
\label{eq:layerestimate}
  \| \GRAD u_\partial \|_{\bL^p(\varpi,\calG)} \lesssim \| F \|_{W^{-1,p}(\varpi,\Omega)} + \| u \|_{L^p(\calG)}.
\end{equation}

Since $u = u_i + u_\partial$, an application of the triangle inequality, and estimates \eqref{eq:interiorestimate} and \eqref{eq:layerestimate} yield the desired bound.
\end{proof}

We are now in position to begin proving Theorem~\ref{thm:main} with the uniqueness result.

\begin{lemma}[uniqueness]
\label{lem:unique}
Let $\Omega$ be a bounded Lipschitz domain, $p \in [2,p_1)$, where $p_1$ is defined in \eqref{eq:defofp1}, and $\varpi \in A_p(\Omega)$. If $u \in W^{1,p}_0(\varpi,\Omega)$ solves \eqref{eq:Poisson} with $F = 0$, then $u = 0$.
\end{lemma}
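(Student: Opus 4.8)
The plan is to exploit the range $p \in [2,p_1)$ so that weighted integrability near the boundary is \emph{weaker} than the unweighted $L^2$ behavior, reducing the problem to a duality/bootstrap argument that lands in the Hilbert setting $p=2$, where uniqueness for the Poisson problem on a Lipschitz domain is classical. First I would record that, since $\varpi \in A_p(\Omega)$, the weight is bounded below and continuous on the collar neighborhood $\calG$; hence on $\calG$ the weighted norm controls (up to a constant) the unweighted one, and locally $u \in W^{1,p}(\calG)$ with $p \ge 2$. On the interior piece $\Omega \setminus \calG$ the weight $\varpi \in A_p$ is merely an $A_p$ weight, but there it causes no obstruction because $u$ is supported away from $\partial\Omega$ and the interior elliptic regularity of Theorem~\ref{thm:C1domain} applies on $\Omega_i \in C^{1,1}$.

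The key steps, in order, would be: (i) use the G\aa rding-type inequality of Lemma~\ref{lem:Garding} with $F=0$ to get $\|\GRAD u\|_{\bL^p(\varpi,\Omega)} \lesssim \|u\|_{L^p(\calG)}$, so it suffices to show $u$ vanishes; (ii) observe that because $p \ge 2$ and $\varpi$ is bounded below on $\calG$ while $\Omega$ is bounded, a combination of the weighted Sobolev embedding on $\Omega_i$ (via Theorem~\ref{thm:C1domain}) and the unweighted theory on the Lipschitz domain $\calG$ (Theorem~\ref{thm:Lipdomains}, whose range includes a neighborhood of $2$ since $p_0 < 2 < p_1$) places $u$ in the unweighted space $W^{1,2}_0(\Omega)$; concretely, split $u = u\psi_i + u\psi_\partial$ as in the proof of Lemma~\ref{lem:Garding}, note $u\psi_\partial \in W^{1,p}_0(\calG)$ embeds into $W^{1,2}_0(\calG)$ because $\varpi^{-1}$ is bounded on $\calG$ and $\calG$ is bounded, and $u\psi_i \in W^{1,p}_0(\Omega_i)$ likewise embeds into $W^{1,2}_0(\Omega_i)$ by boundedness of $\Omega_i$ together with the fact that $\supp(u\psi_i)$ stays in the region where $\varpi$ is an honest $A_p$ weight; then reassemble to conclude $u \in W^{1,2}_0(\Omega)$; (iii) test the weak formulation \eqref{eq:Poisson} with $\varphi = u$ itself (legitimate by density of $C_0^\infty(\Omega)$ in $W^{1,2}_0(\Omega)$ once we know $u \in W^{1,2}_0(\Omega)$), obtaining $\int_\Omega |\GRAD u|^2 = \langle F, u\rangle = 0$, hence $\GRAD u = 0$, and since $u \in W^{1,2}_0(\Omega)$ on a connected domain, $u = 0$ a.e.

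The main obstacle is step (ii): making rigorous the claim that a weighted-$W^{1,p}$ solution with $p\ge 2$ and an $A_p$ weight that is only \emph{locally} controlled near $\partial\Omega$ nevertheless belongs to the unweighted $W^{1,2}_0(\Omega)$. Near the boundary this is just H\"older's inequality using $\varpi \ge \omega_l > 0$ and $|\calG| < \infty$, which is routine; the delicate point is the interior, where $\varpi$ may degenerate or blow up on a compact subset of $\Omega$. One must either invoke the interior regularity furnished by Theorem~\ref{thm:C1domain} on $\Omega_i$ to upgrade $u\psi_i$ to $W^{1,p}(\Omega_i)$ and then note $p \ge 2$ with $\Omega_i$ bounded gives the $L^2$-gradient bound directly (since on a \emph{bounded} domain $\|\GRAD v\|_{L^2} \lesssim \|\GRAD v\|_{L^p}$ for $p \ge 2$), or argue that the singularities of $\varpi$ in the interior are integrable against $|\GRAD u|^2$ because the $A_p$ condition forces $\varpi^{-1} \in L^{1/(p-1)}_{\mathrm{loc}}$. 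Either way the argument is short once one fixes the bookkeeping of which region carries the weight, and that is precisely where care is needed so as not to circularly invoke the very well-posedness in Theorem~\ref{thm:main} that this lemma is meant to help prove.
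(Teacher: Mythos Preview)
Your overall skeleton---split $u=u\psi_i+u\psi_\partial$, place each piece in $W^{1,2}_0$, then test with $\varphi=u$---matches the paper. Step~(i), however, is a detour: the G\aa rding inequality of Lemma~\ref{lem:Garding} plays no role here (it is used later, in the contradiction argument for the a~priori estimate), and your steps~(ii)--(iii) already give $u=0$ directly. The boundary piece $u_\partial$ is handled exactly as in the paper.

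The genuine gap is in the interior piece. Neither of your two proposed routes works as stated. Route~(b) fails by an exponent count: to pass from $\int_{\Omega_i}\varpi|\GRAD u|^p<\infty$ to $\int_{\Omega_i}|\GRAD u|^2<\infty$ via H\"older you need $\varpi^{-2/(p-2)}\in L^1(\Omega_i)$, \ie $\varpi^{-1}\in L^{2/(p-2)}$, whereas the $A_p$ condition only yields $\varpi^{-1}\in L^{1/(p-1)}_{\mathrm{loc}}$; since $1/(p-1)<2/(p-2)$ for every $p>2$ (and the case $p=2$ would require $\varpi^{-1}\in L^\infty$), this never suffices. Route~(a) mis-cites Theorem~\ref{thm:C1domain}: that result is weighted \emph{well-posedness} on $C^1$ domains, not interior regularity, and applying it with the trivial weight produces a solution in $W^{1,p}_0(\Omega_i)$ whose identification with $u_i\in W^{1,p}_0(\varpi,\Omega_i)$ is precisely the cross-space uniqueness you are trying to avoid.

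What the paper does instead is short and avoids the circularity you worried about: since $F=0$, the function $u\in W^{1,p}_0(\varpi,\Omega)\subset W^{1,1}_{\mathrm{loc}}(\Omega)$ is weakly harmonic in all of $\Omega$, so by classical interior elliptic regularity (\cf \cite[Theorems~8.10 and 9.15]{MR737190}, or simply Weyl's lemma) one has $u\in W^{2,r}(\Omega_i)$ for every $r\in(1,\infty)$, because $\bar\Omega_i\subset\Omega$. Then $u_i=u\psi_i$ is smooth with compact support in $\Omega_i$ and in particular lies in $W^{1,2}_0(\Omega_i)$. This is the missing ingredient; once you insert it, your argument coincides with the paper's.
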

\begin{proof}
We begin by observing that the assumptions imply that $u \in W^{2,r}(\Omega_i)$ for every $r \in (1,\infty)$, \cite[Theorem 9.15]{MR737190}; notice that $\partial \Omega_i \in C^{1,1}$. Further, similar computations to the ones that led to \eqref{eq:keyest} reveal that, for all $\varphi \in C_0^\infty(\Omega_i)$, we have
\[
  \left| \int_{\Omega_i} \GRAD u_i \GRAD \varphi \right| \lesssim \| \GRAD \varphi \|_{\bL^{r'}(\Omega_i)}
\]
where the hidden constant depends on $r$ and $u$. This shows that $\varphi \mapsto \int_{\Omega_i} \GRAD u_i \GRAD \varphi$ defines an element of $W^{-1,r}(\Omega_i)$ so that, by Theorem~\ref{thm:Lipdomains}, we obtain that $u_i \in W^{1,2}_0(\Omega_i)$.

Since we are assuming that $\varpi \in A_p(\Omega)$, and, $p \geq 2$, we also have that $u_\partial \in W^{1,p}_0(\varpi,\calG) = W^{1,p}_0(\calG) \hookrightarrow W^{1,2}_0(\calG)$ so that, to conclude
\[
  u = u_i + u_\partial \in W^{1,2}_0(\Omega).
\]
This allows us to set $\varphi = u$ in the condition to obtain that $\GRAD u = 0$ almost everywhere and, thus, $u = 0$.
\end{proof}

Having shown uniqueness we can finally prove Theorem~\ref{thm:main}.

\begin{proof}[Proof of Theorem~\ref{thm:main}]
Consider first $p \in [2,p_1)$ and assume that \eqref{eq:aprioriest} is false. If that is the case, then it is possible to find sequences $(u_k,F_k) \in W^{1,p}_0(\varpi,\Omega) \times W^{-1,p}(\varpi,\Omega)$ such that they satisfy \eqref{eq:Poisson} with $\| \GRAD u_k \|_{\bL^p(\varpi,\Omega)} = 1$, but $F_k \to 0$ in $W^{-1,p}(\varpi,\Omega)$, as $k \to \infty$. By passing to a, not relabeled, subsequence we can assume that $u_k \rightharpoonup u \in W^{1,p}_0(\varpi,\Omega)$ and that this limit satisfies \eqref{eq:Poisson} for $F = 0$, so that, by Lemma~\ref{lem:unique}, we have that $u = 0$. On the other hand, the compact embedding of $W^{1,p}_0(\varpi,\Omega)$ into $L^p(\varpi,\Omega)$ shows that $u_k \to 0$ in $L^p(\varpi,\Omega)$, so that $\| u \|_{L^p(\calG)} = 0$. Consequently, using Lemma~\ref{lem:Garding}, we have that
\[
  1 \leq \| \GRAD u \|_{\bL^p(\varpi,\Omega)} \lesssim \| u \|_{L^p(\calG)} = 0,
\]
which is a contradiction.

With the a priori estimate \eqref{eq:aprioriest} at hand we can now show existence of a solution
$u \in W^{1,p}_0(\varpi,\Omega)$, in the case $p \in [2,p_1)$, by an approximation argument. Indeed, given $F \in W^{-1,p}(\varpi,\Omega)$ we construct a sequence $F_k \in C^\infty(\Omega)$ such that $F_k \to F$ in $W^{-1,p}(\varpi,\Omega)$. Theorem~\ref{thm:Lipdomains} then guarantees the existence of a unique $u_k \in W^{1,p}_0(\Omega)$ that solves \eqref{eq:Poisson} with right hand side $F_k$. To be able to pass to the limit with \eqref{eq:aprioriest} it is then necessary to show that $u_k \in W^{1,p}_0(\varpi,\Omega)$:
\begin{enumerate}[$\bullet$]
  \item Since $\varpi \in A_p(\Omega)$, then $u_k \in W^{1,p}(\varpi,\calG)$.
  
  \item Since $\varpi \in A_p$, we invoke the \emph{reverse H{\"o}lder inequality} \cite[Theorem 5.4]{MR1800316}, and conclude the existence of $\gamma>0$ such that $\varpi^{1+\gamma} \in L^1(\Omega_i)$. Now, given that $F_k \in C^\infty(\Omega)$, we can invoke \cite[Theorem 8.10]{MR737190} to obtain that $u_k \in W^{r,2}(\Omega_i)$ with $r$ so large that, by Sobolev embedding, the right hand side of the inequality
  \[
    \int_{\Omega_i} \varpi |\GRAD u_k|^p \leq \left( \int_{\Omega_i} \varpi^{1+\gamma} \right)^{1/(1+\gamma)} \left( \int_{\Omega_i} |\GRAD u_k|^{p(1+\gamma)/\gamma} \right)^{\gamma/(1+\gamma)}
  \]
  is finite.
\end{enumerate}
This shows that $u_k \in W^{1,p}_0(\varpi,\Omega)$ and, thus, existence of a solution.

Having proved the result for $p \in [2,p_1)$, the assertion for $p \in (p_0,2)$ follows by duality.
\end{proof}

\subsection{Application. Well-posedness with Dirac sources}
\label{sec:fluff}
Let us discuss some applications of our main result. An immediate corollary is the following.
\begin{corollary}[inf--sup condition]
\label{col:infsup}
Let $\Omega$ be a bounded Lipschitz domain, $p \in (p_0,p_1)$, and $\varpi \in A_p(\Omega)$. Then, for every $v \in W^{1,p}_0(\varpi,\Omega)$ we have that
\[
  \| \GRAD v \|_{\bL^p(\varpi,\Omega)} \lesssim \sup_{ 0 \neq w \in W^{1,p'}_0(\varpi',\Omega)} \frac{ \int_\Omega \GRAD v \GRAD w }{ \| \GRAD w \|_{\bL^{p'}(\varpi',\Omega)}}
\]
where the hidden constant is independent of $v$.
\end{corollary}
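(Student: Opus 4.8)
The plan is to derive this inf--sup condition as a direct consequence of the well-posedness asserted in Theorem~\ref{thm:main}, using a standard duality/adjoint argument. Fix $v \in W^{1,p}_0(\varpi,\Omega)$ and define the functional $F \in W^{-1,p}(\varpi,\Omega)$ by $\langle F, \varphi \rangle := \int_\Omega \GRAD v \GRAD \varphi$ for $\varphi \in C_0^\infty(\Omega)$; this is bounded on $W^{1,p'}_0(\varpi',\Omega)$ by H\"older's inequality in weighted spaces (pairing $\varpi^{1/p}$ with $\varpi^{-1/p}$), with norm exactly the right-hand side of the claimed inequality. By construction $v$ is a weak solution of \eqref{eq:Poisson} with this right-hand side $F$, and by the uniqueness part of Theorem~\ref{thm:main} it is \emph{the} weak solution. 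Applying the a priori estimate \eqref{eq:aprioriest} then gives
\[
  \| \GRAD v \|_{\bL^p(\varpi,\Omega)} \lesssim \| F \|_{W^{-1,p}(\varpi,\Omega)} = \sup_{0 \neq w \in W^{1,p'}_0(\varpi',\Omega)} \frac{ \int_\Omega \GRAD v \GRAD w }{ \| \GRAD w \|_{\bL^{p'}(\varpi',\Omega)}},
\]
which is precisely the assertion.

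The one point that needs a little care is matching the space in which $F$ is tested against the definition of the $W^{-1,p}(\varpi,\Omega)$ norm used in the paper. The duality in \eqref{eq:Poisson} is between $W^{-1,p}(\varpi,\Omega)$ and $W^{1,p'}(\varpi',\Omega)$, so I would note that $C_0^\infty(\Omega)$ is dense in $W^{1,p'}_0(\varpi',\Omega)$ and that the functional $\varphi \mapsto \int_\Omega \GRAD v \GRAD \varphi$ extends continuously to this space, with the extension norm equal to the displayed supremum; this identifies $\| F \|_{W^{-1,p}(\varpi,\Omega)}$ with the supremum. Everything else is immediate from the fact, used already in \eqref{eq:Poisson}, that elements of $W^{-1,p}(\varpi,\Omega)$ act on $W^{1,p'}_0(\varpi',\Omega)$.

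I do not expect a genuine obstacle here; the corollary is essentially a restatement of well-posedness (boundedness of the solution operator plus its injectivity) in variational language. The only thing to watch is that $\varpi \in A_p(\Omega)$ indeed entails $\varpi' \in A_{p'}$ (recorded in the preliminaries as $[\varpi']_{A_{p'}} = [\varpi]_{A_p}$), so that $W^{1,p'}_0(\varpi',\Omega)$ is a well-behaved reflexive Banach space and the weighted H\"older inequality applies; and that one stays within the admissible range $p \in (p_0,p_1)$ throughout, which is exactly the hypothesis. Hence the proof is short: introduce $F$, check its norm equals the supremum, invoke Theorem~\ref{thm:main}.
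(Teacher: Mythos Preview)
Your argument is correct, and it is actually a bit more streamlined than the paper's. You observe that $v$ itself solves \eqref{eq:Poisson} with right-hand side $F=-\DIV\GRAD v$, and then apply Theorem~\ref{thm:main} directly at exponent $p$ and weight $\varpi$: the a priori estimate \eqref{eq:aprioriest} bounds $\|\GRAD v\|_{\bL^p(\varpi,\Omega)}$ by $\|F\|_{W^{-1,p}(\varpi,\Omega)}$, which you identify with the supremum on the right.

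The paper takes a dual, constructive route: it introduces $F_v=-\DIV\bigl(\varpi|\GRAD v|^{p-2}\GRAD v\bigr)\in W^{-1,p'}(\varpi',\Omega)$, applies Theorem~\ref{thm:main} at exponent $p'$ and weight $\varpi'$ to produce $w_v\in W^{1,p'}_0(\varpi',\Omega)$ solving $-\Delta w_v=F_v$, and then tests this equation with $\varphi=v$. This yields $\int_\Omega\GRAD v\GRAD w_v=\|\GRAD v\|_{\bL^p(\varpi,\Omega)}^p$ together with $\|\GRAD w_v\|_{\bL^{p'}(\varpi',\Omega)}\lesssim\|\GRAD v\|_{\bL^p(\varpi,\Omega)}^{p-1}$, so the ratio for this particular $w_v$ already dominates $\|\GRAD v\|_{\bL^p(\varpi,\Omega)}$. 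The paper therefore exhibits an explicit near-optimal test function, at the price of checking that $\varpi'\in A_{p'}(\Omega)$ and $p'\in(p_0,p_1)$ (both automatic here since $p_0=p_1'$ and $\varpi$ is continuous and bounded away from zero on $\bar\calG$). Your version avoids this dual invocation entirely; the paper's version makes the inf--sup ``witness'' visible. Either proof is complete.
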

\begin{proof}
Given $v \in W^{1,p}_0(\varpi,\Omega)$ we observe that $\varpi |\GRAD v|^{p-2} \GRAD v \in \bL^{p'}(\varpi',\Omega)$ so that the functional $F_v = -\DIV(\varpi |\GRAD v|^{p-2} \GRAD v) \in W^{-1,p'}(\varpi',\Omega)$ with
\[
  \| F_v \|_{W^{-1,p'}(\varpi',\Omega)} \lesssim \| \GRAD v \|_{\bL^p(\varpi,\Omega)}^{p-1}.
\]
By Theorem~\ref{thm:main} there is a unique function $w_v \in W^{1,p'}_0(\varpi',\Omega)$ that solves \eqref{eq:Poisson} with right hand side $F_v$, \ie
\[
  \int_\Omega \GRAD w_v \GRAD \varphi = \int_\Omega \varpi |\GRAD v|^{p-2} \GRAD v \GRAD \varphi, \quad \forall \varphi \in W^{1,p}_0(\varpi,\Omega),
\]
with the corresponding estimate. Thus, setting $\varphi = v$ the assertion follows.
\end{proof}

The inf--sup condition of Corollary~\ref{col:infsup} allows us to then establish the well-posedness of the Poisson problem with Dirac sources on weighted spaces.

\begin{corollary}[well-posedness]
Let $\Omega$ be a bounded Lipschitz domain and $z \in \Omega$. Then, for $\alpha \in (d-2,d)$, and $\omega_z$ defined as in \eqref{eq:defofourweight}, there is a unique $u \in W^{1,2}_0(\omega_z,\Omega)$ that is a weak solution of
\[
  -\Delta u = \delta_z \ \text{in } \Omega, \qquad u = 0 \ \text{on } \partial\Omega.
\]
\end{corollary}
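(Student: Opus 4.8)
The plan is to use the inf--sup condition of Corollary~\ref{col:infsup} with $p=2$ and weight $\varpi = \omega_z$: together with its analogue for the conjugate pair $(2,\omega_z^{-1})$ it gives well-posedness of the bilinear form $(v,w) \mapsto \int_\Omega \GRAD v \GRAD w$ on $W^{1,2}_0(\omega_z,\Omega) \times W^{1,2}_0(\omega_z^{-1},\Omega)$ via a standard Banach--Ne\v cas--Babu\v ska argument; alternatively, Theorem~\ref{thm:main} applies directly once $\delta_z$ is identified as an element of $W^{-1,2}(\omega_z,\Omega)$. Either route reduces the statement to two checks: that the weights are admissible, and that $\delta_z$ is a legitimate datum, \ie $\delta_z \in W^{-1,2}(\omega_z,\Omega)$.

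The weight check is quick. Since $d \in \{2,3\}$, the interval $(d-2,d)$ sits inside $(-d,d)$, so $\omega_z = |x-z|^\alpha \in A_2$ and likewise $\omega_z^{-1} = \omega_z' = |x-z|^{-\alpha} \in A_2$. Because $z \in \Omega$, we may take the open set $\calG$ of Definition~\ref{def:ApOmega} to be a neighborhood of $\partial\Omega$ bounded away from $z$; on $\bar\calG$ both $\omega_z$ and $\omega_z^{-1}$ are continuous and bounded below by a positive constant, whence $\omega_z,\omega_z^{-1} \in A_2(\Omega)$. Finally $2 \in (p_0,p_1)$ by \eqref{eq:defofp1}, so Theorem~\ref{thm:main} and Corollary~\ref{col:infsup} apply to both $(2,\omega_z)$ and $(2,\omega_z^{-1})$.

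The substantive step is to show that $\varphi \mapsto \varphi(z)$ is bounded on $W^{1,2}_0(\omega_z^{-1},\Omega)$, the predual of $W^{-1,2}(\omega_z,\Omega)$. For $\varphi \in C_0^\infty(\Omega)$, viewed as an element of $C_0^\infty(\R^d)$, the Newtonian potential representation gives $\varphi(z) = c_d \int_\Omega |z-x|^{-d}\,(z-x)\cdot\GRAD\varphi(x)\diff x$, whence by Cauchy--Schwarz
\[
  |\varphi(z)| \lesssim \int_\Omega |x-z|^{1-d}\,|\GRAD\varphi(x)| \diff x \leq \left( \int_\Omega |x-z|^{2(1-d)+\alpha}\diff x \right)^{1/2} \left( \int_\Omega |\GRAD\varphi(x)|^2\,|x-z|^{-\alpha}\diff x \right)^{1/2}.
\]
The first integral is finite precisely when $2(1-d)+\alpha > -d$, \ie when $\alpha > d-2$, which is exactly our hypothesis; thus $|\varphi(z)| \lesssim \|\GRAD\varphi\|_{\bL^2(\omega_z^{-1},\Omega)}$. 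Since $C_0^\infty(\Omega)$ is dense in $W^{1,2}_0(\omega_z^{-1},\Omega)$, this bound extends by continuity and identifies $\delta_z$ with an element of $W^{-1,2}(\omega_z,\Omega)$; Theorem~\ref{thm:main} then yields the unique $u \in W^{1,2}_0(\omega_z,\Omega)$ with $\int_\Omega \GRAD u \GRAD \varphi = \varphi(z)$ for all $\varphi \in C_0^\infty(\Omega)$, which is the claim.

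I expect the only real obstacle to be making the previous paragraph fully rigorous: justifying the potential representation and the density/continuity passage from the dense class $C_0^\infty(\Omega)$ to the full weighted space. The integrability computation there is what delineates the admissible range of $\alpha$ — its lower endpoint $d-2$ is forced by the requirement $\delta_z \in W^{-1,2}(\omega_z,\Omega)$, and its upper endpoint $d$ by the requirement $\omega_z \in A_2$.
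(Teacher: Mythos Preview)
Your proof is correct and follows the same route as the paper: verify $\omega_z \in A_2(\Omega)$ and then show $\delta_z \in W^{-1,2}(\omega_z,\Omega)$, after which Theorem~\ref{thm:main} (equivalently Corollary~\ref{col:infsup}) yields the result. The only difference is cosmetic: the paper cites \cite[Lemma~7.1.3]{KMR} and \cite[Theorem~2.3]{MR3264365} for the membership $\delta_z \in W^{-1,2}(\omega_z,\Omega)$, whereas you supply the explicit Newtonian-potential computation that underlies those references.
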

\begin{proof}
Notice that, since $\alpha \in (d-2,d) \subset (-d,d)$ and $z \in \Omega$, we have that $\omega_z \in A_2(\Omega)$. In light of Corollary~\ref{col:infsup} we only need to prove then that $\delta_z \in W^{-1,2}(\omega_z,\Omega)$, but this follows from \cite[Lemma 7.1.3]{KMR} when $\alpha \in (d-2,d)$; see also \cite[Theorem 2.3]{MR3264365}. This concludes the proof.
\end{proof}

\subsection{A weighted Helmholtz decomposition on Lipschitz domains}
\label{sub:weightdec}

As the results of \cite{MR1601373,MR1896920} show, in the study of the Stokes problem \eqref{eq:StokesStrong} it is sometimes necessary to have a weighted decomposition of the spaces $\bL^p(\omega,\Omega)$, where the weight is adapted to the singularity of $\bF$. Here we show such a decomposition for a Lipschitz domain and for a weight of class $A_p(\Omega)$.

We introduce some notation. For $p \in (1,\infty)$ and a weight $\omega \in A_p$, the space of solenoidal functions is
\[
  \bL^p_{\sigma,N}(\omega,\Omega) = \left\{ \bv \in \bL^p(\omega,\Omega): \DIV \bv = 0\right\}.
\]
The space of gradients is
\[
  \bG^p_D(\omega,\Omega) = \left\{ \GRAD v : v \in W^{1,p}_0(\omega,\Omega) \right\}.
\]
We wish to show the decomposition
\begin{equation}
\label{eq:Helmholtz}
  \bL^p(\omega,\Omega) = \bL^p_{\sigma,N}(\omega,\Omega) \oplus \bG^p_D(\omega,\Omega)
\end{equation}
with a continuous projection $\calP_{p,\omega}: \bL^p(\omega,\Omega) \to \bL^p_{\sigma,N}(\omega,\Omega)$ such that $\ker \calP_{p,\omega} = \bG^p_D(\omega,\Omega)$.

\begin{corollary}[weighted Helmholtz decomposition I]
\label{cor:Hdec}
Let $\Omega$ be a bounded Lipschitz domain, $p \in (p_0,p_1)$, where the range of exponents is defined in \eqref{eq:defofp1}. Then, if $\omega \in A_p(\Omega)$ the decomposition \eqref{eq:Helmholtz} holds.
\end{corollary}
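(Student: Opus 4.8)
The plan is to derive the Helmholtz decomposition \eqref{eq:Helmholtz} directly from the well-posedness of the weighted Poisson problem, Theorem~\ref{thm:main}, together with its associated inf--sup condition, Corollary~\ref{col:infsup}. The construction of the projection $\calP_{p,\omega}$ proceeds through a weighted Poisson solve: given $\bv \in \bL^p(\omega,\Omega)$, the functional $\varphi \mapsto \int_\Omega \bv \cdot \GRAD \varphi$ belongs to $W^{-1,p}(\omega,\Omega)$, with norm controlled by $\| \bv \|_{\bL^p(\omega,\Omega)}$, because $|\int_\Omega \bv \cdot \GRAD \varphi| \le \| \bv \|_{\bL^p(\omega,\Omega)} \| \GRAD \varphi \|_{\bL^{p'}(\omega',\Omega)}$ by the weighted Hölder inequality. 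Hence by Theorem~\ref{thm:main} there is a unique $q_\bv \in W^{1,p}_0(\omega,\Omega)$ with
\[
  \int_\Omega \GRAD q_\bv \cdot \GRAD \varphi = \int_\Omega \bv \cdot \GRAD \varphi, \qquad \forall \varphi \in W^{1,p'}_0(\omega',\Omega),
\]
and $\| \GRAD q_\bv \|_{\bL^p(\omega,\Omega)} \lesssim \| \bv \|_{\bL^p(\omega,\Omega)}$. One then sets $\calP_{p,\omega} \bv := \bv - \GRAD q_\bv$. Linearity and continuity of $\calP_{p,\omega}$ are immediate from the above estimate and the linearity of the solution map.

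Next I would verify that $\calP_{p,\omega}$ has the claimed range and kernel. That $\calP_{p,\omega}\bv \in \bL^p_{\sigma,N}(\omega,\Omega)$ is exactly the statement that $\DIV(\bv - \GRAD q_\bv) = 0$ in the sense of distributions, which is precisely the variational identity above tested against $\varphi \in C_0^\infty(\Omega)$ (a dense subset of $W^{1,p'}_0(\omega',\Omega)$). Clearly $\GRAD q_\bv \in \bG^p_D(\omega,\Omega)$, so every $\bv$ decomposes as $\calP_{p,\omega}\bv + \GRAD q_\bv$ with the first summand solenoidal and the second a gradient, giving $\bL^p(\omega,\Omega) = \bL^p_{\sigma,N}(\omega,\Omega) + \bG^p_D(\omega,\Omega)$. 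For the kernel: if $\bv = \GRAD w$ with $w \in W^{1,p}_0(\omega,\Omega)$, then by uniqueness in Theorem~\ref{thm:main} we get $q_\bv = w$, hence $\calP_{p,\omega}\bv = 0$; conversely if $\calP_{p,\omega}\bv = 0$ then $\bv = \GRAD q_\bv \in \bG^p_D(\omega,\Omega)$. Thus $\ker \calP_{p,\omega} = \bG^p_D(\omega,\Omega)$ exactly.

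It remains to check that the sum is \emph{direct}, i.e. $\bL^p_{\sigma,N}(\omega,\Omega) \cap \bG^p_D(\omega,\Omega) = \{0\}$, and that $\calP_{p,\omega}$ is idempotent. If $\GRAD w \in \bL^p_{\sigma,N}(\omega,\Omega)$ with $w \in W^{1,p}_0(\omega,\Omega)$, then $w$ solves \eqref{eq:Poisson} with $F = 0$, so by the uniqueness part of Theorem~\ref{thm:main} (equivalently Lemma~\ref{lem:unique} after reduction to $p \geq 2$, with the case $p \in (p_0,2)$ handled by duality as in the proof of Theorem~\ref{thm:main}) we conclude $\GRAD w = 0$. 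This also yields $\calP_{p,\omega}^2 = \calP_{p,\omega}$: applying $\calP_{p,\omega}$ to $\GRAD q_\bv$ gives zero since $\GRAD q_\bv \in \bG^p_D(\omega,\Omega) = \ker \calP_{p,\omega}$. Assembling these facts gives the topological direct sum \eqref{eq:Helmholtz} with the continuous projection $\calP_{p,\omega}$ onto $\bL^p_{\sigma,N}(\omega,\Omega)$ along $\bG^p_D(\omega,\Omega)$.

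I expect the only genuinely delicate point to be the density/regularity bookkeeping needed to pass between $C_0^\infty(\Omega)$ and $W^{1,p'}_0(\omega',\Omega)$ when interpreting $\DIV(\calP_{p,\omega}\bv) = 0$ and when identifying $\bG^p_D(\omega,\Omega)$ as a closed subspace; everything else is a formal consequence of Theorem~\ref{thm:main}. Since $\omega \in A_p(\Omega) \subset A_p$, the weighted space $W^{1,p'}_0(\omega',\Omega)$ is well-behaved (smooth functions are dense, the relevant duality $W^{-1,p}(\omega,\Omega) = (W^{1,p'}_0(\omega',\Omega))^*$ holds), so this bookkeeping is routine. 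Closedness of $\bG^p_D(\omega,\Omega)$ follows a posteriori since it equals $\ker \calP_{p,\omega}$, the kernel of a bounded operator.
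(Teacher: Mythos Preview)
Your proposal is correct and follows essentially the same approach as the paper: solve the weighted Poisson problem from Theorem~\ref{thm:main} with right hand side $-\DIV\bv$ to obtain $q_\bv$, and set $\calP_{p,\omega}\bv=\bv-\GRAD q_\bv$; the paper's proof is the same construction, only stated more tersely and without the explicit verification of kernel, range, and idempotency that you supply.
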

\begin{proof}
Let $\bef \in \bL^p(\omega,\Omega)$. By Theorem~\ref{thm:main} there is a unique $u \in W^{1,p}_0(\omega,\Omega)$ that solves \eqref{eq:Poisson} with $F = \DIV \bef$. Setting $\bef = (\bef - \GRAD u) + \GRAD u$ gives, by uniqueness and the estimate on $\GRAD u$, the desired decomposition.
\end{proof}

\subsection{The Neumann problem}
\label{sub:Neumann}

We briefly comment that, with the same techniques, our result can be transferred to the case of Neumann boundary conditions. For that, all that is needed is the analogues to Theorems~\ref{thm:C1domain} and \ref{thm:Lipdomains} to carry out our considerations.

\begin{theorem}[well-posedness of the Neumann problem in Lipschitz domains]
\label{thm:Neumann}
Let $\Omega$ be a bounded Lipschitz domain, $p \in (p_0,p_1)$, with $p_0,p_1$ defined in \eqref{eq:defofp1}, and $\varpi \in A_p(\Omega)$. Then, for every $\bef \in \bL^p(\varpi,\Omega)$ there is a unique $u \in W^{1,p}(\varpi,\Omega)/\R$ such that
\[
  \int_\Omega \GRAD u \GRAD \varphi = \int_\Omega \bef \GRAD \varphi, \quad \forall \varphi \in W^{1,p'}(\varpi,\Omega)
\]
with the estimate
\[
  \| \GRAD u \|_{\bL^p(\varpi,\Omega)} \lesssim \| \bef \|_{\bL^p(\varpi,\Omega)},
\]
where the hidden constant depends on $\Omega$, $[\varpi]_{A_p}$ and $p$, but it is independent of $\bef$.
\end{theorem}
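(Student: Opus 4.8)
The plan is to mirror the proof of Theorem~\ref{thm:main}: a G\aa rding-type inequality, a uniqueness statement, a compactness-plus-approximation argument, and a duality step for $p<2$ --- now with the Neumann analogues of Theorems~\ref{thm:C1domain} and~\ref{thm:Lipdomains} as the two structural inputs, namely the weighted well-posedness of the Neumann problem on bounded $C^{1,1}$ domains and the unweighted well-posedness of the Neumann problem on bounded Lipschitz domains for $p\in(p_0,p_1)$. Throughout I reuse the cutoffs $\psi_i,\psi_\partial$ and the fact that $\varpi$ is continuous and uniformly positive and bounded on $\bar\calG$, and I use the compact embedding $W^{1,p}(\varpi,\Omega)\hookrightarrow\hookrightarrow L^p(\varpi,\Omega)$ and the weighted Poincar\'e--Wirtinger inequality, both valid for $\varpi\in A_p(\Omega)$ on a bounded Lipschitz domain.

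First I would establish the G\aa rding-type bound $\|\GRAD u\|_{\bL^p(\varpi,\Omega)}\lesssim\|\bef\|_{\bL^p(\varpi,\Omega)}+\|u\|_{L^p(\calG)}$ for every weak solution and every $p\in(p_0,p_1)$. Write $u=u_i+u_\partial$ with $u_i=u\psi_i$, $u_\partial=u\psi_\partial$. Since $\psi_i$ vanishes on $\partial\Omega_i$, we have $u_i\in W^{1,p}_0(\varpi,\Omega_i)$, and computations as in \eqref{eq:keyest} show that $u_i$ solves a homogeneous Dirichlet problem on $\Omega_i\in C^{1,1}$ whose data, using the regularity of $\varpi$ on $\bar\calG$ and a weighted Poincar\'e inequality on $\Omega_i$, is bounded by $\|\bef\|_{\bL^p(\varpi,\Omega)}+\|u\|_{L^p(\calG)}$; Theorem~\ref{thm:C1domain} then controls $\|\GRAD u_i\|_{\bL^p(\varpi,\Omega_i)}$. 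For the boundary piece, since $\psi_\partial\equiv0$ near $\partial\calG\setminus\partial\Omega$ and $\psi_\partial\equiv1$ near $\partial\Omega$, the function $u_\partial$ turns out to be a weak solution on the Lipschitz domain $\calG$ of a Neumann problem whose forcing has the form $\varphi\mapsto\int_\calG\bg_\partial\cdot\GRAD\varphi+\int_\calG h_\partial\varphi$, with $\bg_\partial$ assembled from $\psi_\partial\bef$ and $u\GRAD\psi_\partial$ (hence in $\bL^p(\calG)$) and $h_\partial$ assembled from $\bef\cdot\GRAD\psi_\partial$ and $u\Delta\psi_\partial$ (hence in $L^p(\calG)$, supported in a compact subset of $\calG$). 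Testing this identity against constants forces $\int_\calG h_\partial=0$, so a Bogovskii-type construction on a bounded subdomain compactly contained in $\calG$ yields $\bH\in\bL^p(\calG)$, vanishing near $\partial\calG$, with $\DIV\bH=h_\partial$ and $\|\bH\|_{\bL^p(\calG)}\lesssim\|h_\partial\|_{L^p(\calG)}$. Hence $u_\partial$ is the unique (modulo constants) solution of the pure Neumann problem on $\calG$ with datum $\bg_\partial-\bH$, and the Neumann analogue of Theorem~\ref{thm:Lipdomains} --- together once more with the regularity of $\varpi$ on $\bar\calG$ --- gives $\|\GRAD u_\partial\|_{\bL^p(\varpi,\calG)}\lesssim\|\bef\|_{\bL^p(\varpi,\Omega)}+\|u\|_{L^p(\calG)}$. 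Adding the two estimates closes the bound.

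Next comes uniqueness for $p\in[2,p_1)$, along the lines of Lemma~\ref{lem:unique}. If $\bef=0$ then, testing against $C_0^\infty(\Omega)$, the solution $u$ is harmonic in $\Omega$, hence smooth on the compactly contained set $\bar\Omega_i$, so that $u_i=u\psi_i\in W^{1,2}_0(\Omega_i)$; meanwhile $u_\partial\in W^{1,p}(\calG)=W^{1,p}(\varpi,\calG)\hookrightarrow W^{1,2}(\calG)$ and, extended by zero across $\partial\calG\setminus\partial\Omega$, lies in $W^{1,2}(\Omega)$. Thus $u\in W^{1,2}(\Omega)$, and since the weak identity holds for all $\varphi\in C^\infty(\bar\Omega)$, dense in $W^{1,2}(\Omega)$, it extends to $\varphi=u$, whence $\GRAD u=0$ and $u=0$ in $W^{1,p}(\varpi,\Omega)/\R$. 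With uniqueness in hand, the a priori estimate for $p\in[2,p_1)$ follows by the contradiction argument used for Theorem~\ref{thm:main}: normalize representatives by $\int_\Omega u_k\varpi\diff x=0$, extract a weak limit solving the homogeneous problem (hence zero), pass to a strong limit in $L^p(\varpi,\Omega)$ by compactness so that $\|u_k\|_{L^p(\calG)}\to0$, and feed this into the G\aa rding-type bound. Existence then follows by approximating $\bef$ in $\bL^p(\varpi,\Omega)$ by smooth fields, solving the unweighted Lipschitz Neumann problem, verifying membership in $W^{1,p}(\varpi,\Omega)/\R$ via interior regularity on $\bar\Omega_i\Subset\Omega$ and the reverse H\"older inequality (as in the proof of Theorem~\ref{thm:main}), and passing to the limit; finally $p\in(p_0,2)$ is covered by duality, noting that $\varpi'\in A_{p'}(\Omega)$.

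I expect the boundary piece to be the main obstacle. In the Dirichlet case $u_\partial\in W^{1,p}_0(\calG)$ and all of its forcing can be absorbed by a Poincar\'e inequality against $W^{1,p'}_0(\varpi',\calG)$; here the admissible test functions on $\calG$ do not vanish on $\partial\Omega$, so the lower-order term $\int_\calG h_\partial\varphi$ must be rewritten in divergence form, which is exactly the purpose of the identity $\int_\calG h_\partial=0$ and the Bogovskii-type correction. An alternative route is to view $u_\partial$ as the solution of a mixed Dirichlet--Neumann problem on $\calG$ --- Dirichlet on $\partial\calG\setminus\partial\Omega$, Neumann on $\partial\Omega$ --- and to invoke well-posedness of such problems on Lipschitz domains, which restores a usable Poincar\'e inequality; both routes demand some care near the interface $\overline{\partial\calG\setminus\partial\Omega}\cap\partial\Omega$. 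A lesser point is to confirm that the weighted compact embedding and Poincar\'e--Wirtinger inequality hold for the \emph{full} space $W^{1,p}(\varpi,\Omega)$, not merely for $W^{1,p}_0(\varpi,\Omega)$, when $\varpi\in A_p(\Omega)$.
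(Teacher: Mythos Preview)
Your proposal is correct and follows the same scheme the paper sketches: replace the Dirichlet building blocks by their Neumann analogues (the paper cites \cite[Theorem 3]{MR1896920} for $C^1$ domains and \cite[Theorem 2]{MR1030819} for Lipschitz domains) and rerun the G\aa rding--uniqueness--compactness--duality argument of Theorem~\ref{thm:main}. Your treatment of the boundary piece $u_\partial$ via the compatibility condition $\int_\calG h_\partial=0$ and a Bogovskii correction is in fact more careful than the paper's one-line proof, which does not spell out how the lower-order term is absorbed into pure Neumann data on $\calG$.
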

\begin{proof}
All that is needed are the analogues of Theorems~\ref{thm:C1domain} and \ref{thm:Lipdomains} to be able to proceed as before. For that, we use \cite[Theorem 3]{MR1896920} and \cite[Theorem 2]{MR1030819}, respectively.
\end{proof}

This immediately allows us to obtain a different Helmholtz decomposition, where we exchange the boundary conditions from the space of gradients into the space of solenoidal fields. Indeed,  if we define
\[
  \bL^p_{\sigma,D}(\omega,\Omega) = \left\{ \bv \in \bL^p(\omega,\Omega): \DIV \bv = 0, \bv \cdot \bn = 0 \right\},
\]
where we denote by $\bn$ the outer normal to $\Omega$ and 
\[
  \bG^p_N(\omega,\Omega) = \left\{ \GRAD v : v \in W^{1,p}(\omega,\Omega) \right\},
\]
then we can assert the following.

\begin{corollary}[weighted Helmholtz decomposition II]
In the setting of Theorem~\ref{thm:Neumann} we have the following decomposition
\begin{equation}
\label{eq:HelmholtzII}
  \bL^p(\omega,\Omega) = \bL^p_{\sigma,D}(\omega,\Omega) \oplus \bG^p_N(\omega,\Omega).
\end{equation}
\end{corollary}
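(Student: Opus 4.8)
The plan is to mirror, almost verbatim, the proof of Corollary~\ref{cor:Hdec}, but using the Neumann result Theorem~\ref{thm:Neumann} in place of the Dirichlet result Theorem~\ref{thm:main}. So let $\bef \in \bL^p(\omega,\Omega)$ be given. First I would invoke Theorem~\ref{thm:Neumann} with right-hand side $\bef$ to obtain a unique $u \in W^{1,p}(\varpi,\Omega)/\R$ with
\[
  \int_\Omega \GRAD u \GRAD \varphi = \int_\Omega \bef \GRAD \varphi, \quad \forall \varphi \in W^{1,p'}(\varpi',\Omega),
\]
together with the bound $\| \GRAD u \|_{\bL^p(\varpi,\Omega)} \lesssim \| \bef \|_{\bL^p(\varpi,\Omega)}$. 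Then I would write the candidate decomposition $\bef = (\bef - \GRAD u) + \GRAD u$, with $\GRAD u \in \bG^p_N(\omega,\Omega)$ by construction.

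The key step is to check that $\bv := \bef - \GRAD u$ lies in $\bL^p_{\sigma,D}(\omega,\Omega)$, i.e.\ that it is divergence-free and has vanishing normal trace in the appropriate weak sense. Both of these are encoded in the weak Neumann identity above: for every $\varphi \in W^{1,p'}(\varpi',\Omega)$ one has $\int_\Omega \bv \cdot \GRAD \varphi = 0$, and this is precisely the statement that $\DIV \bv = 0$ in $\Omega$ and $\bv \cdot \bn = 0$ on $\partial\Omega$ in the distributional sense that defines the space $\bL^p_{\sigma,D}(\omega,\Omega)$ (taking $\varphi \in C_0^\infty(\Omega)$ gives $\DIV \bv = 0$; the full class of test functions then gives the normal trace condition via integration by parts, which is legitimate since $\DIV \bv = 0 \in \bL^p(\varpi,\Omega)$). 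Continuity and the projection property follow from the estimate: the map $\calP^N_{p,\omega}: \bef \mapsto \bef - \GRAD u$ is bounded on $\bL^p(\omega,\Omega)$, it fixes solenoidal fields with vanishing normal trace (for such $\bef$, $u$ is constant by uniqueness), and its kernel is exactly $\bG^p_N(\omega,\Omega)$ (if $\bef - \GRAD u = 0$ then $\bef = \GRAD u$; conversely if $\bef = \GRAD v$ then by uniqueness $u = v$ mod constants, so $\bef - \GRAD u = 0$). This also shows the sum is direct, since $\bL^p_{\sigma,D}(\omega,\Omega) \cap \bG^p_N(\omega,\Omega) = \{0\}$: a field in the intersection is both the gradient of some $v$ and divergence-free with vanishing normal trace, so testing $\int_\Omega \GRAD v \cdot \GRAD \varphi = 0$ against $\varphi = v$ (legitimate as $v \in W^{1,p}(\varpi,\Omega) \hookrightarrow W^{1,2}(\Omega)$ locally, and globally since $\varpi$ is bounded below near $\partial\Omega$ and satisfies a reverse H\"older inequality in the interior, exactly as in Lemma~\ref{lem:unique}) forces $\GRAD v = 0$.

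The only genuinely delicate point, and the one I would spend the most care on, is the interpretation of the normal trace $\bv \cdot \bn = 0$ for a merely $\bL^p(\varpi,\Omega)$ field on a Lipschitz domain: one must recall that a vector field $\bv \in \bL^p(\varpi,\Omega)$ with $\DIV \bv \in L^p(\varpi,\Omega)$ admits a well-defined normal trace in $W^{-1/p',p}(\partial\Omega)$ (or the weighted analogue), and that the identity $\int_\Omega \bv \cdot \GRAD \varphi + \int_\Omega \varphi \DIV \bv = \langle \bv\cdot\bn, \varphi\rangle_{\partial\Omega}$ holds; since here $\DIV \bv = 0$ and the left-hand side vanishes for all admissible $\varphi$, the normal trace is zero. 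Everything else is a routine transcription of the argument for \eqref{eq:Helmholtz}, so no further obstacles are anticipated.
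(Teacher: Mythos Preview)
Your proposal is correct and follows exactly the approach the paper indicates: the paper's entire proof is the single sentence ``Repeat the proof of Corollary~\ref{cor:Hdec} but using now Theorem~\ref{thm:Neumann},'' and that is precisely what you do. You supply considerably more detail than the paper (the verification that $\bv=\bef-\GRAD u$ has vanishing normal trace, the projection properties, and the directness of the sum), but none of this deviates from the intended route; the paper simply relies on ``uniqueness and the estimate on $\GRAD u$'' for directness, which is the same mechanism you invoke.
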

\begin{proof}
Repeat the proof of Corollary~\ref{cor:Hdec} but using now Theorem~\ref{thm:Neumann}.
\end{proof}

\section{The Stokes problem}
\label{sec:Stokes}
With techniques similar to the ones used to prove Theorem~\ref{thm:main} we can prove the well-posedness of the Stokes problem \eqref{eq:StokesStrong} with singular data $\bF$ and $g$. We begin by remarking that, owing to the boundary conditions on $\bu$, we must necessarily have
\[
  \int_\Omega g = 0.
\]
Thus our notion of weak solution will be the following. For $p \in (1,\infty)$ and $\varpi \in A_p$, given $\bF \in \bL^p(\varpi,\Omega)$ and $g \in L^p(\varpi,\Omega)/\R$ we seek for a pair $(\bu, \pi) \in \bW^{1,p}_0(\varpi,\Omega) \times L^p(\varpi,\Omega)/\R$ such that for all $(\bphi,q) \in \bC_0^\infty(\Omega) \times C_0^\infty(\Omega)$ we have
\begin{equation}
\label{eq:Stokes}
    \int_\Omega \left( \GRAD \bu \GRAD \bphi - \pi \DIV \bphi \right) = \int_\Omega \bF \GRAD \bphi, \quad \int_\Omega \DIV \bu q = \int_\Omega gq.
\end{equation}

In order to derive the well--posedness of the Stokes problem \eqref{eq:Stokes} with singular data $\bF$ and $g$ we will need two auxiliary results. The first one deals with its well-posedness on weighted spaces and $C^1$ domains. For a proof of this result we refer the reader to \cite[Lemma 3.2]{MR3582412}.

\begin{theorem}[well posedness of Stokes for $C^1$ domains]
\label{thm:C1domain_Stokes}
Let $\Omega$ be a bounded $C^1$ domain, $p \in (1,\infty)$ and $\omega \in A_p$. Then, for every $\bF \in \bL^p(\omega,\Omega)$ and $g \in L^p(\omega,\Omega)/\R$ there is a unique $(\bu,\pi) \in \bW^{1,p}_0(\omega,\Omega) \times L^p(\omega,\Omega)/\R$ that is a weak solution to \eqref{eq:Stokes} and, moreover, it satisfies
\[
  \| \GRAD \bu \|_{\bL^p(\omega,\Omega)} + \| \pi \|_{L^p(\omega,\Omega)/\R} \lesssim \| \bF \|_{\bL^p(\omega,\Omega)} + \| g \|_{L^p(\omega,\Omega)},
\]
where the hidden constant depends on $\Omega$, $[\omega]_{A_p}$, and $p$, but it is independent of the data $\bF$ and $g$.
\end{theorem}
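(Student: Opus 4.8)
The plan is to establish the weighted a priori estimate first and then recover existence and uniqueness exactly as in the proof of Theorem~\ref{thm:main}. The starting point is the \emph{unweighted} $L^r$ theory of the Stokes system on bounded $C^1$ domains: for every $r\in(1,\infty)$, $\bF\in\bL^r(\Omega)$ and $g\in L^r(\Omega)/\R$, there is a unique $(\bu,\pi)\in\bW^{1,r}_0(\Omega)\times L^r(\Omega)/\R$ solving \eqref{eq:Stokes} with $\|\GRAD\bu\|_{\bL^r(\Omega)}+\|\pi\|_{L^r(\Omega)/\R}\lesssim\|\bF\|_{\bL^r(\Omega)}+\|g\|_{L^r(\Omega)}$; this is classical and may be obtained, for instance, from the hydrodynamic layer and volume potentials, whose boundary integral operators are invertible on $C^1$ domains. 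I denote by $\mathcal{S}\colon(\bF,g)\mapsto(\GRAD\bu,\pi)$ the resulting solution operator, defined on the scale $\{\bL^r\times(L^r/\R)\}_{r\in(1,\infty)}$.

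The heart of the argument is to promote the boundedness of $\mathcal{S}$ to the weighted setting: for every $p\in(1,\infty)$ and every $\omega\in A_p$,
\[
  \|\mathcal{S}(\bF,g)\|_{\bL^p(\omega,\Omega)\times (L^p(\omega,\Omega)/\R)} \lesssim \|(\bF,g)\|_{\bL^p(\omega,\Omega)\times L^p(\omega,\Omega)},
\]
with a constant depending on $\omega$ only through $[\omega]_{A_p}$. For this I would identify each component of $\mathcal{S}$ with a Calder\'on--Zygmund operator: the interior contribution is a convolution with (derivatives of) the Stokes fundamental solution, and the boundary correction is expressed through the inverse of the layer-potential operator composed with potentials whose kernels satisfy the standard size and H\"ormander regularity estimates on the $C^1$ domain. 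Once this structure is in place, the weighted bound follows from the Coifman--Fefferman weighted norm inequalities for Calder\'on--Zygmund operators (\cf \cite{MR1800316}), which also furnish the required dependence on $[\omega]_{A_p}$; equivalently, one may start from the family of unweighted bounds above and invoke Rubio de Francia extrapolation. An alternative route recasts \eqref{eq:Stokes} as a generalized saddle-point problem and verifies two weighted inf--sup conditions—the weighted LBB condition for the divergence (a weighted Bogovskii estimate) and the inf--sup for the Dirichlet form on solenoidal fields, in the spirit of Corollary~\ref{col:infsup}—but both already encode the same weighted harmonic analysis, so I would not expect it to be simpler. I expect verifying the Calder\'on--Zygmund kernel estimates for the Stokes solution operator on a merely $C^1$ domain, together with the bookkeeping of the $A_p$-constant, to be the main obstacle.

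Having secured the weighted a priori estimate, existence in the weighted spaces follows by the density/approximation scheme used for Theorem~\ref{thm:main}. Given $\bF\in\bL^p(\omega,\Omega)$ and $g\in L^p(\omega,\Omega)/\R$, I would pick smooth data $(\bF_k,g_k)$ with $\bF_k\to\bF$ in $\bL^p(\omega,\Omega)$, $g_k\to g$ in $L^p(\omega,\Omega)/\R$, and $\int_\Omega g_k=0$, and let $(\bu_k,\pi_k)$ be the corresponding unweighted solutions from the base theory. Because $\omega\in A_p$ satisfies a reverse H\"older inequality, there is $\gamma>0$ with $\omega^{1+\gamma}\in L^1(\Omega)$; combined with the interior smoothness of $(\bu_k,\pi_k)$ for smooth data and a H\"older split as in the proof of Theorem~\ref{thm:main}, this shows $(\GRAD\bu_k,\pi_k)\in\bL^p(\omega,\Omega)\times(L^p(\omega,\Omega)/\R)$. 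The weighted a priori estimate then applies uniformly in $k$, so $(\bu_k,\pi_k)$ is Cauchy in the weighted spaces and its limit $(\bu,\pi)$ is the desired weak solution, satisfying the stated estimate.

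Uniqueness is obtained by duality to the conjugate problem. If $(\bu,\pi)\in\bW^{1,p}_0(\omega,\Omega)\times (L^p(\omega,\Omega)/\R)$ solves \eqref{eq:Stokes} with $\bF=0$ and $g=0$, I would test against the solution $(\bv,q)\in\bW^{1,p'}_0(\omega',\Omega)\times(L^{p'}(\omega',\Omega)/\R)$ of the Stokes problem with arbitrary smooth right-hand sides, using that $\omega'=\omega^{-p'/p}\in A_{p'}$ and that the weighted pairing between the spaces for the exponents $(p,\omega)$ and $(p',\omega')$ is well defined. Since smooth data are dense, the resulting cross-testing identities force $\GRAD\bu=0$ and $\pi=0$, whence $\bu=0$ in $\bW^{1,p}_0(\omega,\Omega)$ and $\pi=0$ in $L^p(\omega,\Omega)/\R$. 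Together with the a priori estimate and the existence argument, this completes the proof.
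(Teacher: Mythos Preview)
The paper does not prove this theorem: it simply cites \cite[Lemma~3.2]{MR3582412} for a proof and uses the result as a black box. So there is no ``paper's own proof'' to compare against; your proposal is an attempt to supply an argument the authors deliberately outsourced.

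As a sketch of how such a result is actually obtained, your main line---identify the Stokes solution operator on a $C^1$ domain with a composition of Calder\'on--Zygmund operators (volume potentials plus layer-potential corrections) and invoke the Coifman--Fefferman weighted bounds---is the right picture, and it is essentially what the cited reference does. Your existence-by-approximation and uniqueness-by-duality paragraphs are also reasonable, once the weighted a priori estimate is in hand.

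There is, however, one genuine error worth flagging. You write that ``equivalently, one may start from the family of unweighted bounds above and invoke Rubio de Francia extrapolation.'' This is not correct: Rubio de Francia extrapolation takes as input a \emph{weighted} estimate at a single exponent (for all $A_{p_0}$ weights) and outputs weighted estimates at all exponents. Knowing unweighted $L^r$-boundedness for every $r\in(1,\infty)$ does \emph{not} by itself imply any $A_p$-weighted bound; there are operators bounded on every unweighted $L^r$ that fail on $L^p(\omega)$ for some $\omega\in A_p$. So you cannot replace the Calder\'on--Zygmund verification by a purely unweighted input plus extrapolation---the kernel/CZ analysis (or some other genuinely weighted ingredient) is unavoidable.
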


The second second result previously mentioned deals with the well-posedness of the Stokes problem \eqref{eq:Stokes} when $\Omega$ is a Lipschitz domain. As in the case of the Poisson problem it is necessary now to restrict the range of exponents $p$. However, to our knowledge, the optimal range is not available and we refer the reader to \cite[Theorem 1.1.5]{MR2987056} for a proof the following result and Figure 1 of this reference for a depiction of the allowed range of exponents for $d=2$ and $d=3$.

\begin{theorem}[well posedness of Stokes for Lipschitz domains]
\label{thm:Lipdomains_Stokes}
Let $\Omega$ be a bounded Lipschitz domain. There exists $\varepsilon = \varepsilon(d,\Omega) \in (0,1]$ such that if $|p-2| < \varepsilon$, then for every $\bF \in \bL^p(\Omega)$ and $g \in L^p(\Omega)/\R$ there is a unique  $(\bu,\pi) \in \bW^{1,p}_0(\Omega) \times L^p(\Omega)/\R$ that is a weak solution to \eqref{eq:Stokes}. In addition, this solution satisfies
\[
  \| \GRAD \bu \|_{\bL^p(\Omega)} + \| \pi \|_{L^p(\Omega)/\R} \lesssim \| \bF \|_{\bL^p(\Omega)} + \| g \|_{L^p(\Omega)},
\]
where the hidden constant depends on $\Omega$, and $p$, but it is independent of $F$.
\end{theorem}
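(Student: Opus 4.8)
Theorem~\ref{thm:Lipdomains_Stokes} is the unweighted $\bL^p$ well-posedness of the Stokes system on a Lipschitz domain, valid only for $p$ in a geometry-dependent neighborhood of $2$; the plan is to anchor the argument at $p=2$ and then perturb, following the line of \cite{MR2987056}.

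\emph{Step 1: the Hilbert-space case.} For $p=2$ the statement is classical and follows from the Babu\v{s}ka--Brezzi theory applied to \eqref{eq:Stokes}. On $\bW^{1,2}_0(\Omega)$ the form $(\bu,\bv)\mapsto \int_\Omega \GRAD\bu\,\GRAD\bv$ is bounded and coercive by Poincar\'e's inequality, while the pressure is controlled because $\DIV:\bW^{1,2}_0(\Omega)\to L^2(\Omega)/\R$ is surjective --- equivalently the Ne\v{c}as inequality $\|q\|_{L^2(\Omega)}\lesssim\|q\|_{H^{-1}(\Omega)}+\|\GRAD q\|_{\bH^{-1}(\Omega)}$ holds on every bounded Lipschitz domain, which yields the inf--sup condition. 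This produces a unique $(\bu,\pi)\in\bW^{1,2}_0(\Omega)\times(L^2(\Omega)/\R)$ together with the asserted estimate at $p=2$.

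\emph{Step 2: perturbation off $p=2$.} I would then pass to $|p-2|<\varepsilon$ by a stability-of-invertibility argument. Regard the Stokes operator $\mathcal S_p:\bW^{1,p}_0(\Omega)\times(L^p(\Omega)/\R)\to \bW^{-1,p}(\Omega)\times(L^p(\Omega)/\R)$, $(\bu,\pi)\mapsto(-\Delta\bu+\GRAD\pi,\DIV\bu)$, understood weakly as in \eqref{eq:Stokes}. It is bounded for every $p\in(1,\infty)$; its domain and target form complex interpolation scales in $p$ on a Lipschitz domain (the only care needed concerns the quotient by $\R$ and the space $\bW^{-1,p}$, both of which interpolate); and $\mathcal S_2$ is invertible by Step~1. \v{S}ne\u{\i}berg's lemma on the persistence of invertibility along an interpolation scale then yields $\varepsilon=\varepsilon(d,\Omega)\in(0,1]$ such that $\mathcal S_p$ is invertible for $|p-2|<\varepsilon$, with $\|\mathcal S_p^{-1}\|$ controlled by the $L^2$ and interpolation constants; unwinding the definitions gives exactly the existence, uniqueness and estimate in the statement. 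Two alternative routes bypass interpolation: the layer-potential method of Fabes--Kenig--Verchota and Mitrea--Wright --- represent $(\bu,\pi)$ by the hydrostatic single- and double-layer potentials, reduce the Dirichlet problem to inverting $\pm\tfrac12 I+\mathbf K$ on $\bL^2(\partial\Omega)$ via Rellich identities, and extend this invertibility together with the nontangential-maximal-function/square-function estimates to $\bL^p(\partial\Omega)$ for $|p-2|$ small --- and Shen's real-variable method --- derive a boundary reverse-H\"older inequality for $|\GRAD\bu|+|\pi|$ from the $L^2$ solution and self-improve the integrability by a good-$\lambda$ argument.

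\emph{Main obstacle.} The difficulty lies entirely in the roughness of $\partial\Omega$: there is no boundary elliptic regularity to spend, so everything reduces to producing a single Rellich-type (or interpolation-stability) estimate near $\partial\Omega$, and it is this that forces the admissible range to shrink to the small, non-explicit interval $|p-2|<\varepsilon$ rather than the wider window $(p_0,p_1)$ available for the scalar Poisson problem. Compared with that scalar case, the extra subtlety is the pressure: one must keep the inf--sup constant and the Ne\v{c}as/Bogovski\u{\i} inequality stable under the perturbation, which on a Lipschitz domain again comes down to the same boundary estimate and not to a routine compactness argument.
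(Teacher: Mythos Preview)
The paper does not supply its own proof of this statement. It records Theorem~\ref{thm:Lipdomains_Stokes} as a black-box input and simply refers the reader to \cite[Theorem~1.1.5]{MR2987056} for a proof and to Figure~1 there for the admissible range of exponents. So there is no ``paper's proof'' to compare against; your write-up is a sketch of what the cited reference does.

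As such a sketch, your proposal is accurate. The $p=2$ case via Babu\v{s}ka--Brezzi and the Ne\v{c}as/Bogovski\u{\i} inequality is exactly how the Hilbert anchor is obtained, and your Step~2 correctly identifies the mechanisms used in the literature to open up a neighborhood of $p=2$: \v{S}ne\u{\i}berg-type stability of invertibility along the $\bW^{1,p}_0\times(L^p/\R)$ interpolation scale, the layer-potential route of Fabes--Kenig--Verchota/Mitrea--Wright, and Shen's real-variable/reverse-H\"older argument. The reference \cite{MR2987056} is in the layer-potential tradition, so of your three alternatives that is the one the paper is actually invoking; the \v{S}ne\u{\i}berg and Shen approaches are genuine alternatives that reach the same conclusion. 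Your closing remarks about why the window is only $|p-2|<\varepsilon$ rather than $(p_0,p_1)$, and about the pressure being the extra obstruction relative to the scalar case, are also on point.

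In short: nothing is wrong, but be aware that in the present paper this theorem is quoted, not proved, and the authors' own contribution begins only with Theorem~\ref{thm:Stokes}.
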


The well-posedness for the Stokes problem is then as follows.

\begin{theorem}[Stokes problem]
\label{thm:Stokes}
Let $\Omega$ be a bounded Lipschitz domain, let $\varepsilon$ be as in Theorem~\ref{thm:Lipdomains_Stokes}, $p\in[2,2+\varepsilon)$, and $\varpi \in A_p(\Omega)$. If $\bF \in \bL^p(\varpi,\Omega)$ and $g \in L^p(\varpi,\Omega)/\R$, then there is a unique weak solution $(\bu,\pi) \in \bW^{1,p}_0(\varpi,\Omega) \times L^p(\varpi,\Omega)/\R$ of \eqref{eq:Stokes} which satisfies
\begin{equation}
\label{eq:aprioristokes}
  \| \GRAD \bu \|_{\bL^p(\varpi,\Omega)} + \| \pi \|_{L^p(\varpi,\Omega)/\R} \lesssim \| \bF \|_{\bL^p(\varpi,\Omega)} + \| g \|_{L^p(\varpi,\Omega)},
\end{equation}
where the hidden constant is independent of $\bF$.
\end{theorem}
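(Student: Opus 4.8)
The plan is to follow the proof of Theorem~\ref{thm:main} almost verbatim: establish a G\aa rding-like inequality by localization, deduce uniqueness, obtain the a priori bound \eqref{eq:aprioristokes} by a compactness/contradiction argument, and then get existence by density. The only genuinely new ingredient is a \emph{weighted inf--sup condition} for the divergence on the Lipschitz domain $\Omega$,
\[
  \| q \|_{L^p(\varpi,\Omega)/\R} \lesssim \sup_{\mathbf{0}\neq\bphi\in\bW^{1,p'}_0(\varpi',\Omega)} \frac{\int_\Omega q\,\DIV\bphi}{\|\GRAD\bphi\|_{\bL^{p'}(\varpi',\Omega)}}, \qquad q \in L^p(\varpi,\Omega)/\R,
\]
which I would prove, precisely because $p'=p/(p-1)$ satisfies $|p'-2|<\varepsilon$ when $p\in[2,2+\varepsilon)$, by localizing a bounded right inverse of $\DIV$ in the spirit of the Bogovskii construction: split $q$ via $\psi_i,\psi_\partial$, compensate the means with a fixed bump supported in $\calG\cap\Omega_i$, invert $\DIV$ on the $C^{1,1}$ piece $\Omega_i$ using Theorem~\ref{thm:C1domain_Stokes} (with $\bF=\mathbf{0}$) and on the Lipschitz piece $\calG$ using Theorem~\ref{thm:Lipdomains_Stokes}, the latter transplanted to the weighted spaces because $\varpi$, hence $\varpi'$, is continuous and bounded above and below on $\bar\calG$ (so $\varpi\approx 1$ there). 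Feeding the momentum equation into this inf--sup shows that \emph{every} weak solution $(\bu,\pi)$ of \eqref{eq:Stokes} satisfies $\|\pi\|_{L^p(\varpi,\Omega)/\R}\lesssim\|\GRAD\bu\|_{\bL^p(\varpi,\Omega)}+\|\bF\|_{\bL^p(\varpi,\Omega)}$, so the pressure is always slaved to the velocity.

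For the G\aa rding-like inequality, fix the representative $\int_\Omega\pi=0$ and put $\bu_i=\psi_i\bu$, $\bu_\partial=\psi_\partial\bu$, $\rho_i=\psi_i\pi$, $\rho_\partial=\psi_\partial\pi$. Computations as in \eqref{eq:keyest} show $(\bu_i,\rho_i)$ solves the Stokes problem on $\Omega_i\in C^{1,1}$ with $\DIV\bu_i=\psi_i g+\GRAD\psi_i\cdot\bu$ (of zero mean, since $\bu_i\in\bW^{1,p}_0(\varpi,\Omega_i)$) and with a forcing which, being a bounded functional on $\bW^{1,p'}_0(\varpi',\Omega_i)$, is of the form $-\DIV\bF_i$ with $\bF_i\in\bL^p(\varpi,\Omega_i)$, collecting $\psi_i\bF$ and commutators supported in $\supp\GRAD\psi_i$. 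All commutators involving $\bu$ — including those containing $\GRAD\bu$, after an integration by parts that moves the derivative onto the smooth, compactly supported factor — are dominated by $\|\bu\|_{\bL^p(\calG)}$ thanks to $\varpi\approx 1$ on $\supp\GRAD\psi_i$ and the weighted Poincar\'e inequality on $\Omega_i$. The one term that neither reduces to $\bu$ nor is of the form $\int\rho\,\DIV\bphi$ is $\int_\Omega\pi\,\GRAD\psi_i\cdot\bphi$; this I would put into a \emph{negative} norm over a fixed set: if $\calG'$ is open with $\supp\GRAD\psi_i\Subset\calG'\Subset\calG\cap\Omega_i$, then $\GRAD\psi_i\cdot\bphi\in W^{1,p'}_0(\calG')$ and, since $\varpi\approx 1$ on $\bar\calG'$,
\[
  \Bigl|\int_\Omega\pi\,\GRAD\psi_i\cdot\bphi\Bigr| \le \|\pi\|_{W^{-1,p}(\calG')}\,\|\GRAD\psi_i\cdot\bphi\|_{W^{1,p'}_0(\calG')} \lesssim \|\pi\|_{W^{-1,p}(\calG')}\,\|\GRAD\bphi\|_{\bL^{p'}(\varpi',\Omega_i)}.
\]
Theorem~\ref{thm:C1domain_Stokes} then bounds $\|\GRAD\bu_i\|_{\bL^p(\varpi,\Omega_i)}$, and the same localization on the Lipschitz domain $\calG$ via Theorem~\ref{thm:Lipdomains_Stokes} (legitimate since $|p-2|<\varepsilon$) bounds $\|\GRAD\bu_\partial\|_{\bL^p(\varpi,\calG)}$; summing $\bu=\bu_i+\bu_\partial$ and invoking the inf--sup for the pressure yields
\begin{equation}\label{eq:stokes_garding}
  \|\GRAD\bu\|_{\bL^p(\varpi,\Omega)}+\|\pi\|_{L^p(\varpi,\Omega)/\R} \lesssim \|\bF\|_{\bL^p(\varpi,\Omega)}+\|g\|_{L^p(\varpi,\Omega)}+\|\bu\|_{\bL^p(\calG)}+\|\pi\|_{W^{-1,p}(\calG')}.
\end{equation}

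Uniqueness, the a priori bound, and existence then follow as in the Poisson case. If $\bF=\mathbf{0}$, $g=0$, then $(\bu,\pi)$ solves $-\Delta\bu+\GRAD\pi=\mathbf{0}$, $\DIV\bu=0$ in $\mathcal{D}'(\Omega)$, so by interior regularity for the Stokes system $\bu,\pi$ are smooth in $\Omega$ (the weight is harmless: $\bu\in\bL^1_{\mathrm{loc}}(\Omega)$ because $\varpi'\in L^1_{\mathrm{loc}}(\Omega)$); since $\psi_i$ is supported away from $\partial\Omega$, $\bu_i$ is smooth on $\overline{\Omega_i}$ and vanishes on $\partial\Omega_i$, so $\bu_i\in\bW^{1,2}_0(\Omega_i)$, while $\bu_\partial\in\bW^{1,p}_0(\calG)=\bW^{1,p}_0(\varpi,\calG)\hookrightarrow\bW^{1,2}_0(\calG)$ because $p\ge2$; hence $\bu\in\bW^{1,2}_0(\Omega)$, $\pi\in L^2(\Omega)/\R$, and testing with $\bphi=\bu$, using $\DIV\bu=0$, forces $\bu=\mathbf{0}$ and then $\pi=0$. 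For \eqref{eq:aprioristokes} one argues by contradiction as in the proof of Theorem~\ref{thm:main}: a sequence $(\bu_k,\pi_k)$ with data tending to $0$ and $\|\GRAD\bu_k\|_{\bL^p(\varpi,\Omega)}+\|\pi_k\|_{L^p(\varpi,\Omega)/\R}=1$ has, along a subsequence, $\bu_k\rightharpoonup\bu$, $\pi_k\rightharpoonup\pi$ with $(\bu,\pi)=(\mathbf{0},0)$ by uniqueness; the compact embedding $\bW^{1,p}_0(\varpi,\Omega)\hookrightarrow\bL^p(\varpi,\Omega)$ gives $\|\bu_k\|_{\bL^p(\calG)}\to0$, and $\pi_k\rightharpoonup0$ in $L^p(\varpi,\Omega)$ restricts to a bounded, weakly null sequence in $L^p(\calG')$, hence $\pi_k\to0$ in $W^{-1,p}(\calG')$ by the compactness of $L^p(\calG')\hookrightarrow W^{-1,p}(\calG')$; inserting all this into \eqref{eq:stokes_garding} contradicts $\|\GRAD\bu_k\|+\|\pi_k\|=1$. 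Existence follows by approximating $(\bF,g)$ with smooth data, solving \eqref{eq:Stokes} in the unweighted spaces with Theorem~\ref{thm:Lipdomains_Stokes}, checking that the solutions lie in the weighted spaces (trivially on $\calG$, and on $\Omega_i$ by the reverse H\"older inequality together with the local smoothness of the solution, exactly as in the proof of Theorem~\ref{thm:main}), and passing to the limit with \eqref{eq:aprioristokes}.

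The main obstacle is, throughout, the pressure. What is new relative to the Poisson problem is: (i) the weighted inf--sup/Bogovskii estimate on the \emph{Lipschitz} domain $\Omega$, needed just to control $\pi$ at all; (ii) the commutator $\int_\Omega\pi\,\GRAD\psi_i\cdot\bphi$, the unique contribution that can be neither localized as a pressure nor reduced to a Rellich-compact quantity in $\bu$, and which is precisely why one must introduce the fixed set $\calG'$ and measure $\pi$ there in a negative norm; and (iii) ensuring $\pi_k\to0$ strongly in $W^{-1,p}(\calG')$ in the contradiction argument, which works only because $\calG'$ sits well inside the region where $\varpi$ is regular.
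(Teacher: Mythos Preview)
Your argument is correct and follows the same four--step architecture as the paper (G\aa rding, uniqueness, contradiction, existence), but it diverges in how the pressure is handled. You introduce a weighted Bogovskii/inf--sup on the Lipschitz polytope $\Omega$ and use it to slave $\pi$ to $\bu$ globally, so that in the localization you only need to track the velocity; the pressure commutator then contributes the single compact term $\|\pi\|_{W^{-1,p}(\calG')}$. The paper avoids the Bogovskii step entirely: it localizes \emph{both} $\bu$ and $\pi$ (setting $\pi_i=\psi_i\pi$, $\pi_\partial=\psi_\partial\pi$), applies the Stokes estimates of Theorems~\ref{thm:C1domain_Stokes} and \ref{thm:Lipdomains_Stokes} to the pairs $(\bu_i,\pi_i)$ and $(\bu_\partial,\pi_\partial)$, and reads off the pressure bound directly from those, arriving at the G\aa rding inequality
\[
\| \GRAD \bu \|_{\bL^p(\varpi,\Omega)} + \| \pi \|_{L^p(\varpi,\Omega)} \lesssim \| \bF \|_{\bL^p(\varpi,\Omega)} + \| g \|_{L^p(\varpi,\Omega)} + \| \bu \|_{\bL^p(\calG)} + \| \pi \|_{W^{-1,p}(\varpi,\Omega_i)} + \| \pi \|_{W^{-1,p}(\calG)},
\]
with \emph{two} negative--norm pressure terms, both of which are compact and vanish along the contradicting sequence just as your single term does. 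In short: the paper's route is more direct and needs no auxiliary divergence lemma; your route proves an additional (independently useful) weighted right inverse of $\DIV$ and is rewarded with a tidier G\aa rding inequality. Uniqueness, the contradiction argument, and existence by approximation are essentially identical in both.
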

\begin{proof}
The proof will follow the same steps as the case of the Poisson problem:
\begin{enumerate}[$\bullet$]
  \item \emph{G\aa rding inequality}: We prove that if $(\bu,\pi) \in \bW^{1,p}_0(\varpi,\Omega) \times L^p(\varpi,\Omega)/\R$ solves \eqref{eq:Stokes}, then we have
  \begin{multline}
    \label{eq:GardingStokes}
    \| \GRAD \bu \|_{\bL^p(\varpi,\Omega)} + \| \pi \|_{L^p(\varpi,\Omega)} \lesssim \| \bF \|_{\bL^p(\varpi,\Omega)} + \| g \|_{L^p(\varpi,\Omega)} \\+ \| \bu \|_{\bL^p(\calG)} + \| \pi \|_{W^{-1,p}(\varpi,\Omega_i)} + \| \pi \|_{W^{-1,p}(\calG)}.
  \end{multline}
  Indeed, by using the cutoff function $\psi_i$ and defining $\bu_i:= \bu \psi_i$ and $\pi_i:= \pi \psi_i$, we observe that $(\bu_i,\pi_i) \in \bW^{1,p}_0(\varpi,\Omega_i) \times L^p(\varpi,\Omega_i)$ solve \eqref{eq:Stokes} with
  \begin{align*}
    \int_{\Omega_i} \bF_i \GRAD \bphi &= 
    \int_\Omega \bF \GRAD \bphi 
    + \int_\calG \bu \otimes \GRAD \psi_i \GRAD \bphi 
    + \int_\calG \bu \DIV( \GRAD \psi_i \otimes \bphi ) 
    + \int_\calG \pi \bphi \GRAD\psi_i, \\
    \int_{\Omega_i} g_i q &= \int_\Omega g \psi_i q + \int_\calG \bu \GRAD\psi_i q,
  \end{align*}
  where $\bphi \in \bC_0^{\infty}(\Omega_i)$ and $q \in C_0^{\infty}(\Omega_i)$. Consequently, the estimates of \cite[Lemma 3.2]{MR3582412} yield that
  \[
    \| \GRAD \bu_i \|_{\bL^p(\varpi,\Omega_i)} + \| \pi_i \|_{L^p(\varpi,\Omega_i)} \lesssim \| \bF_i \|_{\bL^p(\varpi,\Omega_i)} + \| g_i \|_{L^p(\varpi,\Omega_i)}
  \]
  with
  \[
    \| g_i \|_{L^p(\varpi,\Omega_i)} = \sup_{0 \neq q \in C_0^\infty(\Omega_i)} \frac{ \int_{\Omega_i} g_i q }{ \| q \|_{L^{p'}(\varpi',\Omega_i)}} \lesssim 
    \| g \|_{L^p(\varpi,\Omega)} + \| \bu \|_{\bL^p(\calG)}
  \]
  and
  \begin{align*}
    \| \bF_i \|_{\bL^p(\varpi,\Omega_i)} &\lesssim \| \bF \|_{\bL^p(\varpi,\Omega)} + \| \bu \|_{\bL^p(\calG)} 
    + \sup_{0 \neq \bphi \in \bC_0^\infty(\Omega_i)} \frac{ \int_\calG \pi \bphi \GRAD \psi_i }{ \| \GRAD \bphi \|_{\bL^{p'}(\varpi',\Omega_i)}} \\
    &\lesssim \| \bF \|_{\bL^p(\varpi,\Omega)} + \| \bu \|_{\bL^p(\calG)}  + \| \pi \|_{W^{-1,p}(\varpi,\Omega_i)}.
  \end{align*}

  We now use the cutoff function $\psi_\partial$ to define the functions $\bu_\partial = \bu \psi_\partial \in \bW^{1,p}(\calG)$ and $\pi_\partial = \pi \psi_\partial \in L^p(\calG)$. A similar calculation, together with \cite[Theorem 2.9]{MR1386766} gives then the desired bound for $(\bu_\partial,\pi_\partial)$ and, thus, \eqref{eq:GardingStokes}.
  
  \item \emph{Uniqueness}: We now prove that $\bF = \boldsymbol{0}$ and $g=0$ imply $\bu = \boldsymbol{0}$ and $\pi = 0$. The argument is similar to Lemma~\ref{lem:unique}. We first observe that, by \cite[Theorem IV.4.2]{MR2808162} we have $(\bu_i,\pi_i) \in \bW^{2,r}(\Omega_i) \times W^{1,r}(\Omega_i) \hookrightarrow \bW^{1,2}(\Omega_i) \times L^2(\Omega_i)$. In addition $(\bu_\partial,\pi_\partial) \in \bW^{1,p}(\varpi,\calG) \times L^p(\varpi,\calG) \hookrightarrow \bW^{1,2}(\calG) \times L^2(\calG)$.
  
  \item \emph{A priori estimate \eqref{eq:aprioristokes}}: This is, once again, proved by contradiction. We assume \eqref{eq:aprioristokes} is false so that exist sequences
  \[
   ( \bu_k, \pi_k) \in \bW_0^{1,p}(\varpi,\Omega) \times L^p(\varpi,\Omega)/ \R, \qquad ( \bF_k, g_k) \in \bL^p(\varpi,\Omega) \times L^p(\varpi,\Omega)/\R
  \]
  such that
  $\|\GRAD \bu_k\|_{\bL^p(\varpi,\Omega)} + \| \pi_k\|_{L^p(\varpi,\Omega)} = 1$ but that $\| \bF_k \|_{L^p(\varpi,\Omega)} + \| g_k \|_{L^p(\varpi,\Omega)} \to 0$. Extracting weakly convergent subsequences and using uniqueness we conclude that the limits must be $\bu = \boldsymbol{0}$ and $\pi = 0$. However, by compactness and \eqref{eq:GardingStokes}
  \[
    1 \lesssim \| \bu \|_{\bL^p(\calG)} + \| \pi \|_{W^{-1,p}(\varpi,\Omega_i)} + \| \pi \|_{W^{-1,p}(\calG)} = 0,
  \]
  which is a contradiction.
  
  \item \emph{Existence}: Finally, we construct a solution by approximation. For that, it suffices to invoke the interior regularity of \cite[Theorem IV.4.2]{MR2808162}.
\end{enumerate}

This concludes the proof.
\end{proof}

\section*{Acknowledgements}
AJS would like to thank his colleague Tadele Mengesha for fruitful discussions.

\bibliographystyle{siamplain}
\bibliography{biblio}
\end{document}